\numberwithin{equation}{section}
\numberwithin{theorem}{section}
\newcommand{\ints}{\mathbb Z}
\newcommand{\rats}{\mathbb Q}
\newcommand{\complex}{\mathbb C}
\newcommand{\proj}{\mathbb P}
\newcommand{\Gp}{\mathbb G}
\newcommand{\Gal}{\text{Gal}}
\newcommand{\Spec}{\mbox{Spec }}
\newcommand{\mc}[1]{\mathcal{#1}}
\newcommand{\ol}[1]{\overline{#1}}
\newcommand{\fracpart}[1]{\langle #1 \rangle}
\journalname{}
\begin{document}

\title{On Colmez's product formula for periods of CM-abelian varieties
\thanks{The author was supported by an NSF Postdoctoral Research Fellowship in the Mathematical Sciences.  Final editing of this work
was undertaken during a fellowship at the Max Planck Institute for Mathematics in Bonn.}
}


\author{Andrew Obus}


\institute{
               Columbia University, Department of Mathematics, 2990 Broadway, MC 4403, New York, NY 10027\\
              Tel.: +1-212-854-6240\\
              Fax: +1-212-854-8962\\
              \email{andrewobus@gmail.com}  }

\date{Received: date / Accepted: date}

\maketitle

\begin{abstract}
Colmez conjectured a product formula for periods of abelian varieties with complex multiplication by a field $K$, analogous to the standard 
product formula in algebraic number theory.  He proved this conjecture up to a rational power of $2$ for $K/\rats$ abelian.  
In this paper, we complete the proof of Colmez for $K/\rats$ abelian by eliminating this
power of $2$.  Our proof relies on analyzing the Galois action on the De Rham cohomology of Fermat curves in mixed characteristic $(0,2)$, which in turn
relies on understanding the stable reduction of $\ints/2^n$-covers of the projective line, branched at three points.
\keywords{Fermat curve \and Stable reduction \and Product formula}
\subclass{11G20 \and 11G15 \and 14H25 \and 14H30 \and 14K22 \and 32G20}
\end{abstract}

\section{Introduction}\label{Sintro}
The \emph{product formula} in algebraic number theory states that, given an algebraic number $x \ne 0$ in a number field $K$, 
the product of $|x|$ as $|\cdot|$ ranges over all
inequivalent absolute values of $K$ (appropriately normalized) is equal to $1$.  In logarithmic form, the sum of $\log |x|$ as $|\cdot|$ ranges over all inequivalent
absolute values is $0$.
In \cite{Co:pv}, Colmez asked whether an analogous product formula might hold for periods of 
algebraic varieties, and conjectured that it would hold for periods of abelian varieties with complex multiplication (CM-abelian varieties).  He proved that, for abelian
varieties with complex multiplication by \emph{abelian} extensions of $\rats$, such a product formula holds (in logarithmic form) up to an (unknown) rational multiple of 
$\log 2$ (\cite[Th\'{e}or\`{e}me
0.5 and discussion after Conjecture 0.4]{Co:pv}).  A key step in this proof was provided by work of Coleman and McCallum (\cite{CM:sr}, \cite{Co:fm}) on 
understanding stable
models of quotients of Fermat curves in mixed characteristic $(0,p)$, where $p$ is an odd prime.  These quotients are $\ints/p^n$-covers of the projective line, branched 
at three points.  The unknown rational multiple of $\log 2$ was necessary in \cite{Co:pv} precisely because the stable models of $\ints/2^n$-covers of the 
projective line, branched at three points, in mixed characteristic $(0, 2)$, were not well-understood at the time.  This problem was solved by the author in  
\cite{Ob:fm}, where a complete description of the stable models of such covers was given.
In this paper, we use the results of \cite{Ob:fm} to complete the proof of Colmez's product formula for abelian extensions of $\rats$ by 
eliminating the multiple of $\log 2$ in question.

Colmez first looks at the example of $2 \pi i$, which is a period for the variety $\Gp_m$, rather than for an abelian variety.  
For each prime $p$, one can view $2\pi i$ as an element $t_p$ 
of Fontaine's ring of periods $\mathbf{B}_p$, and its $p$-adic absolute value is $|t_p|_p = p^{1/(1-p)}$.   The archimedean absolute value $|\cdot|_{\infty}$ is the 
standard one, so $|2 \pi i|_{\infty}  = 2\pi$.  The logarithm of the product of all of these absolute values is 
$$\log 2\pi - \sum_{p < \infty} \frac{\log p}{p-1}.$$   This sum does not converge, but formally, it is equal to $\log 2\pi - \frac{\zeta'(1)}{\zeta(1)}$, where $\zeta$ is the 
Riemann zeta function.  Using the functional equation of $\zeta$ (and ignoring the $\Gamma$ factors), we obtain $\log 2 \pi - \frac{\zeta'(0)}{\zeta(0)}$, which is equal 
to $0$.  In this sense, we can say that the product formula holds for $2 \pi i$.

The above method can be adapted to give a definition of what it means to take the logarithm of the product of all the absolute values of a period, 
and thus to give a product formula meaning.  
Many subtleties arise, and the excellent and thorough introduction to \cite{Co:pv} discusses them in detail.  We will not attempt to recreate this discussion.  
Instead, we will just note that Colmez shows that the product formula for periods of CM-abelian varieties with complex multiplication by an abelian extension of $\rats$
(in logarithmic form) is equivalent to the formula 
\begin{equation}\label{Eproductformula}
ht(a) = Z(a^*, 0)
\end{equation}
for all $a \in \mc{CM}^{ab}$ (\cite[Th\'{e}or\`{e}me II.2.12(iii)]{Co:pv}).  
Here, $\mc{CM}^{ab}$ is the vector space of $\rats$-valued, locally constant functions $a: \Gal(\rats^{ab}/\rats) \to \rats$ such that,
if $c$ represents complex conjugation, then $a(g) + a(cg)$ does not depend on $g \in G_{\rats}$.  
Such a function can be decomposed into a $\complex$-linear combination of Dirichlet
characters whose L-functions do not vanish at $0$.  If $a \in \mc{CM}^{ab}$ then we define $a^* \in \mc{CM}^{ab}$ by $a^*(g) = a(g^{-1})$.  Also, $Z(\cdot, 0)$ is the 
unique $\complex$-linear function on 
$\mc{CM}^{ab} \otimes \complex$ equal to $\frac{L'(\chi, 0)}{L(\chi, 0)}$ when its argument is a Dirichlet character $\chi$ whose $L$-function does not vanish at $0$.  
Lastly, $ht(\cdot)$ is a $\complex$-linear function on 
$\mc{CM}^{ab} \otimes \complex$ related to Faltings heights of abelian varieties (see \cite[Th\'{e}or\`{e}me 0.3]{Co:pv} for a precise definition, also
\cite{Ya:cs}).

Colmez shows (\cite[Proposition III.1.2, Remarque on p.\ 676]{Co:pv}) that 
\begin{equation}\label{Eerror}
Z(a^*, 0) - ht(a) = \sum_{p \text{ prime}} w_p(a) \log p,
\end{equation} where $w_p: \mc{CM}^{ab} \to \rats$ is a $\rats$-linear function (depending on $p$) that will be defined in
\S\ref{Sfrob}.  He then further shows that $w_p(a) = 0$ for all $p \geq 3$ and all $a \in \mc{CM}^{ab}$ (\cite[Corollaire III.2.7]{Co:pv}).  
Thus (\ref{Eproductformula}) is correct up to a 
adding a rational multiple of $\log 2$. 
Our main theorem (Theorem \ref{Tmain}) states that $w_2(a) = 0$ for all $a \in \mc{CM}^{ab}$, thus proving (\ref{Eproductformula}).

We note that, in light of the expression (\ref{Eproductformula}), Colmez's formula is fundamentally about relating periods of CM-abelian varieties to 
logarithmic derivatives of L-functions.  That this can be expressed as a product formula is aesthetically pleasing, 
but the main content is encapsulated by (\ref{Eproductformula}).

In \S\ref{Sfrob}, we define $w_p$ and show how it is related to De Rham cohomolgy of Fermat curves.  
In \S\ref{Smonodromy}, we write down the important properties of the stable model of a certain quotient of 
the Fermat curve $F_{2^n}$ of degree $2^n$ ($n \geq 2$) over $\rats_2$, and we discuss the monodromy action on the stable reduction.  
In \S\ref{Sdiffforms}, we show how knowledge of 
this stable model, along with the monodromy action, 
allows us to understand the Galois action on the De Rham cohomology of $F_{2^n}$.  In \S\ref{Sformula}, we show how this is used to 
prove that $w_2(a) = 0$.  Lastly, in \S\ref{Scomputations}, we collect some technical power series computations that are used in 
\S\ref{Sdiffforms}, but would interrupt the flow of the paper if included there.

\subsection{Conventions}\label{Sconventions}
The letter $p$ always represents a prime number.  If $x \in \rats/\ints$, then $\fracpart{x}$ is the unique representative for $x$ in the interval $[0, 1)$.  
The standard $p$-adic valuation on $\rats$ is denoted $v_p$, and the subring $\ints_{(p)} \subseteq \rats$ 
consists of the elements $x \in \rats$ with $v_p(x) \geq 0$.  If $K$ is a field, then $\ol{K}$ is its algebraic closure and $G_K$ is its absolute Galois group.

\section{Galois actions on De Rham cohomology}\label{Sfrob}
The purpose of this section is to define the function $w_p(a)$ from (\ref{Eerror}).  In order to make this definition, one must first consider a particular
rational factor of the Jacobian of the $m$th Fermat curve (where $m$ is related to $a$).  
This factor will have complex multiplication, and we will choose a de Rham
cohomology class that is an eigenvector for this complex multiplication.  One can then define the ``$p$-adic valuation" 
of such a cohomology class, and this valuation essentially determines $w_p(a)$.

Recall that the action of $G_{\rats}$ on roots of unity gives a
homomorphism $\chi: G_{\rats} \to \hat{\ints}^{\times}$.  This factors through $\Gal(\rats^{ab}/\rats)$, giving an isomorphism
$\Gal(\rats^{ab}/\rats) \cong \hat{\ints}^{\times}$, called the 
\emph{cyclotomic character}.  Multiplication by the cyclotomic character gives a well-defined action of
$G_{\rats}$ on $\rats/\ints$, factoring through $\Gal(\rats^{ab}/\rats)$.  

The following definitions are from \cite[III]{Co:pv}.  
Recall that $\mc{CM}^{ab}$ is the vector space of $\rats$-valued, locally constant functions $a: \Gal(\rats^{ab}/\rats) \to \rats$ such that, 
if $c$ represents complex conjugation, then $a(g) + a(cg)$ does not depend on $g \in G_{\rats}$ (\cite[p.\ 627]{Co:pv}).  
For $r \in \rats/\ints$, define an element $a_r \in \mc{CM}^{ab}$ by
$$a_r(g) = \fracpart{gr} - \frac{1}{2}.$$  One can show that the $a_r$ generate $\mc{CM}^{ab}$ as a $\rats$-vector space.  
For $r \in \rats/\ints$, set $v_p(r) = \min(v_p(\fracpart{r}), 0)$, and set 
\begin{equation}\label{Epdef}
r_{(p)} = p^{-v_p(r)}r \in \rats/\ints.
\end{equation}
Set
$$V_p(r) = \begin{cases} 0 & r \in \ints_{(p)}/\ints \\ (\fracpart{r} - \frac{1}{2})v_p(r) - \frac{1}{(p-1)p^{-v_p(r)-1}}(\fracpart{\frac{r_{(p)}}{p}} - \frac{1}{2}) & \text{otherwise,}
\end{cases}$$
where $\frac{r_{(p)}}{p}$ is the unique element of $\ints_{(p)}/\ints$ such that $\frac{r_{(p)}}{p} \cdot p = r_{(p)}$.

Let $q = (\rho, \sigma, \tau) \in (\rats/\ints)^3,$ such that $\rho+\sigma+\tau = 0$ and none of $\rho$, $\sigma$, or $\tau$ are $0$.  
Let $m$ be a positive integer such that $m\rho = m\sigma = m\tau = 0$.
Let $\epsilon_q  = \fracpart{\rho} + \fracpart{\sigma} + \fracpart{\tau} - 1$.
Let $F_m$ be the $m$th Fermat curve, that is, the smooth, proper model of the affine curve over $\rats$ given by $u^{m} + v^{m} = 1$, and let $J_m$ be its Jacobian. 
Write $\fracpart{\rho} = \frac{a}{m}$ and $\fracpart{\sigma} = \frac{b}{m}$.
Consider the closed differential form 
$$\eta_{m, q} := m\fracpart{\rho+\sigma}^{\epsilon_q}u^av^b\frac{v}{u}d\left(\frac{u}{v}\right)$$ on $F_m$. 
We can view its De Rham cohomology class as a class $\omega_{m, q} \in H^1_{DR}(J_m) \cong H^1_{DR}(F_m)$ over $\rats$.  
It turns out that there is a particular rational factor $J_q$ of $J_m$ with complex
multiplication, and a class $\omega_q \in H^1_{DR}(J_q)$, such that the pullback of $\omega_q$ to $J_m$ is $\omega_{m, q}$.  Furthermore, $\omega_q$ is an
eigenvector for the complex multiplication on $J_q$.  As is suggested by the notation, the pair 
$(J_q, \omega_q)$ depends only on $q$, not on $m$, up to isomorphism (\cite[p.\ 674]{Co:pv}).  

Now, $G_{\rats}$ acts diagonally on $(\rats/\ints)^3$ by the cyclotomic character.  If $\gamma \in G_{\rats}$, then 
$J_q = J_{\gamma q}$ (\cite[p.\ 674]{Co:pv}).  Fix an embedding $\ol{\rats} \hookrightarrow \ol{\rats}_p$, which gives rise to an embedding $G_{\rats_p} \hookrightarrow
G_{\rats}$.  If $\gamma$ lies in the inertia group $I_{\rats_p} \subseteq G_{\rats_p} \subseteq G_{\rats}$, then $\gamma$ acts on $J_q$, and thus on 
$H^1_{DR}(J_q, \ol{\rats}_p)$.
We have $\gamma^*\omega_q = \beta_{\gamma}(q) \omega_{\gamma q}$ where the constant $\beta_{\gamma}(q)$ lies in some finite extension of 
$\rats_p$ (\cite[pp.\ 676-7]{Co:pv}).  
We also note that $I_{\rats_p}$ acts on $H^1_{DR}(F_m, \ol{\rats}_p) \cong H^1_{DR}(J_m, \ol{\rats}_p)$ via its action on $F_m$.  One derives
\begin{equation}\label{Epullback}
\gamma^*\omega_{m, q} = \beta_{\gamma}(q) \omega_{m, \gamma q}.
\end{equation}

If $K$ is a $p$-adic field with a valuation $v_p$, then there is a notion of $p$-adic valuation of $\omega \in H^1_{DR}(A)$ whenever $A$ is a CM-abelian variety 
defined over $K$ and $\omega$ is an eigenvector for the complex multiplication (\cite[p.\ 659]{Co:pv}---note that $\omega_q$ is such a class).  
By abuse of notation, we also write this valuation as $v_p$.
It has the property that, if $c \in K$, then $v_p(c\omega) = v_p(c) + v_p(\omega)$.  

\begin{lemma}\label{Laux}
If $\gamma \in I_{\rats_p}$, then $v_p(\omega_q) - v_p(\omega_{\gamma q}) = v_p(\beta_{\gamma}(q))$.
\end{lemma}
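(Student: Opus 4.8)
The plan is to apply the $p$-adic valuation $v_p$ to both sides of the eigenvector relation $\gamma^*\omega_q = \beta_{\gamma}(q)\,\omega_{\gamma q}$ recorded just before the statement, and to exploit the fact that $\gamma$ lies in the inertia group. Since $J_q = J_{\gamma q}$, both $\omega_q$ and $\omega_{\gamma q}$ are eigenvector classes in $H^1_{DR}(J_q, \ol{\rats}_p)$ to which the valuation applies, and the right-hand side is a scalar multiple of such a class; hence every term in the identity $v_p(\gamma^*\omega_q) = v_p(\beta_{\gamma}(q)\,\omega_{\gamma q})$ is defined and the manipulation makes sense.

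First I would evaluate the right-hand side using the one property of the valuation that has been isolated, namely $v_p(c\omega) = v_p(c) + v_p(\omega)$ for a scalar $c$. Taking $c = \beta_{\gamma}(q)$ and $\omega = \omega_{\gamma q}$ gives $v_p(\beta_{\gamma}(q)\,\omega_{\gamma q}) = v_p(\beta_{\gamma}(q)) + v_p(\omega_{\gamma q})$, which is a purely formal step.

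The substantive step, and the one I expect to be the main obstacle, is to show that $v_p(\gamma^*\omega_q) = v_p(\omega_q)$, i.e.\ that the valuation of an eigenvector class is invariant under the action of inertia. This is precisely the reason the hypothesis $\gamma \in I_{\rats_p}$ is imposed. Unwinding Colmez's definition of $v_p$ in \cite[p.\ 659]{Co:pv}, the valuation of an eigenvector class is read off by comparing it against an integral de Rham structure defined over the maximal unramified extension (equivalently, against a N\'{e}ron-model differential after passing to good reduction). An element of the inertia group fixes the residue field and therefore preserves this integral comparison datum, while it also preserves the normalized valuation on the scalars of $\ol{\rats}_p$; consequently $\gamma^*$ carries both the lattice and the valuation to themselves, giving $v_p(\gamma^*\omega_q) = v_p(\omega_q)$. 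I would verify this directly from the construction of $v_p$ in \cite{Co:pv}, checking that the data used to define it is genuinely inertia-invariant rather than merely $G_{\rats_p}$-equivariant up to the Frobenius twist.

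Combining the two computations yields $v_p(\omega_q) = v_p(\gamma^*\omega_q) = v_p(\beta_{\gamma}(q)) + v_p(\omega_{\gamma q})$, and rearranging gives $v_p(\omega_q) - v_p(\omega_{\gamma q}) = v_p(\beta_{\gamma}(q))$, as claimed.
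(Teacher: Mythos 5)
Your proposal is correct and takes essentially the same route as the paper: apply $v_p$ to the relation $\gamma^*\omega_q = \beta_{\gamma}(q)\,\omega_{\gamma q}$, use the scalar property $v_p(c\omega) = v_p(c) + v_p(\omega)$, and reduce everything to the inertia-invariance $v_p(\gamma^*\omega_q) = v_p(\omega_q)$. The only difference is that the paper disposes of that key invariance step by directly citing Colmez's Th\'{e}or\`{e}me II.1.1 of \cite{Co:pv}, which is precisely the fact you propose to verify by unwinding the construction of $v_p$, so your identification of it as the one substantive ingredient is exactly right.
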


\begin{proof}
By \cite[Th\'{e}or\`{e}me II.1.1]{Co:pv}, we have $v_p(\omega_q) = v_p(\gamma^*\omega_q)$.  The lemma then follows from the definition of
$\beta_{\gamma}(q)$.
\end{proof}

Let $b_q = a_{\rho} + a_{\sigma} + a_{\tau} \in \mc{CM}^{ab}$.
There is a unique linear map $w_p: \mc{CM}^{ab} \to \rats$ such that $$w_p(b_q) = v_p(\omega_q) - V_p(q)$$ (\cite[Corollaire III.2.2]{Co:pv}).  This is 
the map $w_p$ from (\ref{Eerror}).  Recall from \S\ref{Sintro}
that Colmez showed $w_p(a) = 0$ for all $p \geq 3$ and all $a \in \mc{CM}^{ab}$.  
In Theorem \ref{Tmain}, we will show that $w_2(a) = 0$ for all $a \in \mc{CM}^{ab}$.  

\section{Fermat curves}\label{Sfermat}

In general, a branched Galois cover $f: Y \to X := \proj^1$ defined over $\rats_p$ does not necessarily have good reduction.  However, assuming that 
$2g(X) + r \geq 3$ (where $r$ is the number of branch points), one can always find a finite extension $K/\rats_p$ with valuation ring $R$, and a
\emph{stable model} $f^{st}: Y_R \to X_R$ for the cover (i.e., $f^{st}$ is a finite map of flat $R$-curves whose generic fiber
is $f$, and where $Y^{st}$ has reduced, stable fibers, considering the specializations of the ramification points of $f$ as marked points).
The special fiber $\ol{f}: \ol{Y} \to \ol{X}$ of $f^{st}$ is called the \emph{stable reduction} of the cover.
Furthermore, there is an action of $G_{\rats_p}$ on $\ol{f}$ (called the \emph{monodromy action}), 
given by reducing its canonical action on $f$, and this action factors through $\Gal(K/\rats_p)$.  For more details, see 
\cite{DM}, \cite{Ra}, \cite{Liu}.

Calculating the stable reduction and monodromy action of a cover can be difficult, even when the Galois group is simple  
(see, e.g., \cite{LM}, where Lehr and Matignon calculate the stable reduction and monodromy action of $\ints/p$-covers branched at
arbitrarily many equidistant points).  Restricting to three branch points can simplify matters.  A major result of Coleman and McCallum (\cite{CM:sr})
calculated the stable reduction of all cyclic covers of $\proj^1$ branched at three points, when $p \neq 2$.  From this, the monodromy 
action was calculated in \cite{Co:fm}, which sufficed to prove Colmez's product formula up to the factor of $\log 2$.  The case $p=2$ (for
three-point covers) is somewhat more complicated, and requires new techniques by the author in \cite{Ob:fm}.
Enough details are given in \cite{Ob:fm} to calculate the monodromy action explicitly, which 
we do to the extent we need to in \S\ref{Smonodromy}.
The work in \S\ref{Sdiffforms} and \S\ref{Sformula} mimics the work in \cite{Co:fm} and \cite{Co:pv}, respectively, 
to show how a knowledge of the monodromy action leads to a proof of the product formula.

\subsection{The monodromy action}\label{Smonodromy}

Fix $n \geq 2$.  Let $f:Y \to X := \proj^1$ be the 
branched cover given birationally by the equation $y^{2^n} = x^a(x-1)^b$, defined over $\rats_2$, where $x$ is a fixed coordinate on $\proj^1$.
Assume for this entire section that $a$ is odd, that $1 \leq v_2(b) \leq n-2$, and that $0 < a, b < 2^n$.  
Set $s = n-v_2(b)$ (this makes $2^s$ the branching index of $f$ at $x=1$).  
Thus $s \geq 2$.  Let $K/\rats_2$ be a finite extension, with valuation ring $R$, over which $f$ admits a stable model $f^{st}: Y^{st} \to X^{st}$.
Let $k$ be the residue field of $K$.  We write $I_{\rats_2} \subseteq G_{\rats_2}$ for the inertia group.  
Let $\ol{f}: \ol{Y} \to \ol{X}$ be the special fiber of $f^{st}$ (called the \emph{stable reduction} of $f$).  We focus on the monodromy action
of the inertia group $I_{\rats_2}$ on $\ol{f}$.  Throughout this section we write $v$ for the valuation on $K$ satisfying $v(2) = 1$.
We will allow finite extensions of $K$ as needed.

The following proposition is the result that underlies our entire computation.
\begin{proposition}[\cite{Ob:fm}, Lemma 7.8]\label{P2fermatreduction}
\begin{enumerate}[(i)]
\item There is exactly one irreducible component $\ol{X}_b$ of $\ol{X}$ above which $\ol{f}$ is generically \'{e}tale.  
\item Furthermore, $\ol{f}$ is \'{e}tale above $\ol{X}_b^{sm}$, i.e., the smooth points of $\ol{X}$ that lie on $\ol{X}_b$.  
\item Let $$d = \frac{a}{a+b} + \frac{\sqrt{2^nbi}}{(a+b)^2}.$$ and extend $K$ finitely (if necessary) so that $K$ contains $d$, as well as
an element $e$ such that 
$v(e) = n - \frac{s}{2} + \frac{1}{2}$.
Here, $i$ can be either square root of $-1$ and $\sqrt{2^nbi}$ can be either square root of $2^nbi$. 
Then, in terms of the coordinate $x$, the $K$-points of $X$ that specialize to $\ol{X}_b^{sm}$ form a closed disc of radius $|e|$ centered at $d$.
\item For each $k$-point $\ol{u}$ of $\ol{X}_b^{sm}$, 
the $K$-points of $X$ that specialize to $\ol{u}$ form an open disc of radius $|e|$.
\end{enumerate}
\end{proposition}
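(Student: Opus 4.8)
This is a stable-reduction computation for the cyclic $2^n$-cover $f$ given by $y^{2^n}=g(x)$ with $g(x)=x^a(x-1)^b$, and the cleanest route is to locate directly the unique disc in $X=\proj^1$ over which $\ol f$ becomes generically \'etale and then to read (i)--(iv) off an explicit rescaling there. The plan is to start from the logarithmic derivative $(\log g)'(x)=\tfrac{a}{x}+\tfrac{b}{x-1}$, whose only zero is $c_0=\tfrac{a}{a+b}$; this is the point at which the cover is maximally degenerate, and every candidate good-reduction disc must be concentrated near $c_0$. First I would expand $\log g$ about $c_0$, recording $(\log g)''(c_0)=-\tfrac{(a+b)^3}{ab}$ and enough of the higher terms to control them, and then introduce a rescaling $x=d+et$ with $|t|\le 1$, so that the reduction of $f$ over $\{|t|\le 1\}$ is governed by the reduction of the top ghost component $\tfrac{1}{2^n}\bigl(\log g(d+et)-\log g(d)\bigr)$ viewed as a function of $\bar t$.

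The core is to fix $|e|$ and $d$ so that this reduction is a nonconstant, separable Artin--Schreier--Witt class; here a valuation count does the work. Choosing $v(e)=n-\tfrac{s}{2}+\tfrac12$ is exactly what makes the quadratic-in-$t$ coefficient of $\tfrac{1}{2^n}\log g$ have valuation $0$ while the linear, cubic, and higher coefficients all acquire positive valuation (the hypothesis $s\le n-1$ is what keeps the higher terms harmless), so the quadratic term is the unique surviving contribution and the reduction is nonconstant. Pinning down the center is subtler: it is forced by the requirement that the reduction, computed as an Artin--Schreier--Witt cover level by level up the $\ints/2^n$-tower, be separable and \'etale over the entire smooth locus with the branch points pushed to the boundary. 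Matching the top-level class to separable form yields a quadratic condition on $\delta=d-c_0$, of the shape $(a+b)^4\delta^2=2^n b\,i$, whose solution $\delta=\tfrac{\sqrt{2^n b i}}{(a+b)^2}$ is precisely the stated correction; the square root and the $i=\sqrt{-1}$ are the signatures of completing that square in residue characteristic $2$. One checks $v(\delta)=n-\tfrac{s}{2}<v(e)$, so $|\delta|>|e|$ and $c_0$ lies \emph{outside} the good-reduction disc.

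With $d$ and $e$ in hand the remaining work is verification. Substituting $x=d+et$ and reducing exhibits $\ol f$ over $\{|t|\le 1\}$ as a generically \'etale $\ints/2^n$-cover of $\ol X_b\cong\proj^1_k$, \'etale wherever $\bar t$ is finite and the Witt data has no poles, with all ramification and all specializations of the branch points $0,1,\infty$ concentrated at the boundary (non-smooth) points of $\ol X_b$; this gives (ii) and identifies $\ol X_b$ with the closed disc of radius $|e|$ centered at $d$, which is (iii). Uniqueness in (i) follows because the construction is forced: a component carrying generically \'etale reduction must come from the unique critical disc around $c_0$, and the same valuation count shows no other disc produces a nonconstant separable reduction. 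Finally (iv) is the standard passage from a component to the residue discs of its smooth points: the rescaling coordinate reduces to a coordinate $\bar t$ on $\ol X_b^{sm}\cong\mathbb{A}^1_k$, so the $K$-points specializing to a fixed $\bar u$ are exactly those $x$ with $\overline{(x-d)/e}=\bar u$, an open disc of radius $|e|$.

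The main obstacle is the center determination in the second paragraph, i.e. the level-by-level analysis of the reduction as an Artin--Schreier--Witt cover up the $\ints/2^n$-tower. This is exactly the new difficulty absent for odd $p$ (where Coleman--McCallum center at the analogue of $c_0$ with a cleaner radius): in residue characteristic $2$ the good-reduction disc must be displaced from the naive critical point by completing a square, and keeping the $2$-adic valuations under control well enough to see both the correct radius and this displacement is the technical heart of the argument, supplied by \cite{Ob:fm}.
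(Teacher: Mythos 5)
The first thing to say is that the paper contains no proof of this proposition: it is imported verbatim from \cite[Lemma 7.8]{Ob:fm}, with Remark \ref{Rcoleman} handling the only adaptation needed (the residue field is assumed algebraically closed in \emph{loc.\ cit.}, which is harmless once one restricts to $k$-points in part (iv)). So your attempt is being measured against the argument in \cite{Ob:fm} rather than against anything in this paper. At that level, your reconstruction captures the right mechanism, and the checkable numerology is correct: the zero of $(\log g)'$ at $c_0=a/(a+b)$, the value $(\log g)''(c_0)=-(a+b)^3/(ab)$, the count $v(\delta)=n-\frac{s}{2}<v(e)=n-\frac{s}{2}+\frac{1}{2}$ showing $c_0$ lies outside the disc, and your quadratic condition $(a+b)^4\delta^2=2^nbi$ is exactly what produces the congruence $c_1^2/c_2\equiv 2^{n+1}i \pmod{2^{n+2}}$ of Lemma \ref{Lnormalize}, which is the engine of the root extraction in \S\ref{Scomputations}. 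Your closing observation that the displaced center is the genuinely new feature at $p=2$ is also accurate.

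Nevertheless, as a proof the proposal has two genuine gaps. First, uniqueness in (i): your valuation count only examines discs centered near $c_0$, and the assertion that ``the same valuation count shows no other disc produces a nonconstant separable reduction'' is not something a disc-by-disc computation gives you. A priori a three-point $\ints/2^n$-cover could have several \'{e}tale tails over different components of $\ol{X}$ (this happens for general three-point covers), and ruling that out in \cite{Ob:fm} uses the global structure theory of stable reduction for covers with cyclic $2$-Sylow --- the vanishing cycles formula and deformation data bounding the number and position of tails --- not a local count at one critical point. Second, the criterion itself: governing the reduction by the ``top ghost component'' $\frac{1}{2^n}\bigl(\log g(d+et)-\log g(d)\bigr)$ is the odd-$p$ Kummer-to-Artin--Schreier heuristic, and at $p=2$ it is insufficient. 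Here $v(c_1)=n+\frac{1}{2}$ and $v(c_2)=n$, both \emph{below} the level $n+1$ at which a $2^n$th root would exist termwise, so making the quadratic coefficient a unit while the linear and higher ones are small does not by itself yield complete splitting over the open disc; splitting holds only because of the exact relation $c_1^2/c_2\equiv 2^{n+1}i$, verified through the inductive estimates of Lemma \ref{L4thpower} and Proposition \ref{Pasymptoticvaluation} (Lemma C.2 of \cite{Ob:fm}). Relatedly, the identification in (iii) of the \emph{closed} disc of radius exactly $|e|$ with the preimage of $\ol{X}_b^{sm}$, including the boundary circle $|x-d|=|e|$, must be extracted from the stable-model analysis and does not follow from exhibiting one separable reduction. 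You concede all of this by deferring the ``technical heart'' to \cite{Ob:fm}, which is honest, but it means what you have is a correct roadmap to the cited lemma rather than an independent proof of it.
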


\begin{remark}\label{Rcoleman}
The result \cite[Lemma 7.8]{Ob:fm} is more general, in that it proves an analogous statement when $2$ is replaced by any prime $p$. Such a 
result was already shown in \cite{Co:fm} when $p$ is an odd prime (with some restrictions in the case $p=3$).  In \cite[Lemma 7.8]{Ob:fm}, $k$ is 
assumed to be algebraically closed, but as long as we restrict to $k$-points in Proposition \ref{P2fermatreduction}(iv), everything works.
\end{remark}

For any $K$-point $w$ in the closed disc from Proposition \ref{P2fermatreduction}(iii), write $\ol{w}$
for its specialization to $\ol{X}_b$, which is a $k$-point.  For such a $w$, if $t$ is defined by $x = w + et$, then 
$\hat{\mc{O}}_{X^{st}, \ol{w}} = R[[t]],$ where $\hat{\mc{O}}_{X^{st}, \ol{w}}$ is the completion of the local ring $\mc{O}_{X^{st}, \ol{w}}$ at its
maximal ideal.  The variable $t$ is called a \emph{parameter} for $\hat{\mc{O}}_{X^{st}, \ol{w}}$.  One thinks of 
$R[[t]]$ as the ring of functions on the open unit disc $|t| < 1$, which corresponds to the open disc $|x - w| < |e|$.

For $\gamma \in I_{\rats_2}$, let $\chi(\gamma) \in \ints_2^{\times}$ be the cyclotomic character applied to $\gamma$.
Maintain the notation $d$ from Proposition \ref{P2fermatreduction}. 

\begin{lemma}\label{Lgaloisaction}
Fix $\gamma \in I_{\rats_2}$.
Let $a'$ (resp.\ $b'$) be the integer between $0$ and $2^n -1$ congruent to $\chi(\gamma)a$ (resp.\ $\chi(\gamma)b$) modulo $2^n$.
Let $$d' = \frac{a'}{a'+b'} + \frac{\sqrt{2^nb'i}}{(a'+b')^2}$$
(here $i$ is the same square root of $-1$ chosen in the definition of $d$, but $\sqrt{2^nb'i}$ can be either choice of square root).
Then we have $\ol{\gamma(d)} = \ol{d'}$.
\end{lemma}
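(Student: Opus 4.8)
The plan is to compute $\ol{\gamma(d)}$ directly from the explicit description of the specialization map in Proposition~\ref{P2fermatreduction} and to compare it with $\ol{d'}$. First I would note that, because $\ol{X}_b$ is the \emph{unique} component of $\ol{X}$ over which $\ol{f}$ is generically étale (Proposition~\ref{P2fermatreduction}(i)), the monodromy action of $\gamma$ must carry $\ol{X}_b$ to itself and hence permute its smooth points. Thus the closed disc of Proposition~\ref{P2fermatreduction}(iii) is $\gamma$-stable, so $\gamma(d)$ again specializes into $\ol{X}_b^{sm}$ and $\ol{\gamma(d)}$ is a well-defined $k$-point. By Proposition~\ref{P2fermatreduction}(iv) the specialization of a point $w$ in this disc is the reduction of $(w-d)/e$, so two such points have the same image in $\ol{X}_b^{sm}$ exactly when their difference has valuation greater than $v(e)=n-\tfrac{s}{2}+\tfrac{1}{2}$. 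The lemma therefore reduces to the single estimate \[ v(\gamma(d)-d') > n-\tfrac{s}{2}+\tfrac{1}{2}, \] for a suitable choice of $\sqrt{2^nb'i}$; granting it, $d'$ lies in the same open disc as $\gamma(d)$ and so $\ol{d'}=\ol{\gamma(d)}$.

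To prove the estimate I would split $\gamma(d)-d'$ into a first-order and a second-order part. Since $a,b\in\ints$ are fixed by $\gamma$, we have $\gamma(d)=\frac{a}{a+b}+\frac{\gamma(\sqrt{2^nbi})}{(a+b)^2}$, and the first-order difference is \[ \frac{a}{a+b}-\frac{a'}{a'+b'}=\frac{ab'-a'b}{(a+b)(a'+b')}. \] From $a'\equiv\chi(\gamma)a$ and $b'\equiv\chi(\gamma)b\pmod{2^n}$ one computes $ab'-a'b=2^n(k_1b-ak_2)$ for integers $k_1,k_2$, while $a+b$ and $a'+b'$ are odd; hence this term has valuation at least $n$, which exceeds $v(e)$ precisely because $s\geq 2$. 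This handles the first-order part immediately.

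The heart of the matter is the second-order part $\frac{\gamma(\sqrt{2^nbi})}{(a+b)^2}-\frac{\sqrt{2^nb'i}}{(a'+b')^2}$, in which both summands have valuation $n-\tfrac{s}{2}$; I must show the difference gains a further half. Writing it as $\frac{\sqrt{2^nb'i}}{(a'+b')^2}(R-1)$ with $R:=\frac{\gamma(\sqrt{2^nbi})}{\sqrt{2^nb'i}}\cdot\frac{(a'+b')^2}{(a+b)^2}$, the goal becomes $v(R-1)>\tfrac{1}{2}$. The decisive input is that $\gamma$ acts on $i$ through the cyclotomic character, $\gamma(i)=i^{\chi(\gamma)}$, which contributes a sign $\epsilon:=(-1)^{(\chi(\gamma)-1)/2}$; squaring then gives $R^2=\epsilon\,\frac{b}{b'}\,\frac{(a'+b')^4}{(a+b)^4}\in\rats_2^{\times}$. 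Using $\frac{b}{b'}\equiv\chi(\gamma)^{-1}$ and $\frac{a'+b'}{a+b}\equiv\chi(\gamma)\pmod 4$ (the first from $b'\equiv\chi(\gamma)b\pmod{2^n}$ with $v_2(b)=n-s$ and $s\geq 2$), the fact that $u^4\equiv 1\pmod{16}$ for odd $u$, and the elementary congruence $\epsilon\,\chi(\gamma)^{-1}\equiv 1\pmod 4$, I would conclude $R^2\equiv 1\pmod 4$, so $v(R^2-1)\geq 2$. As every odd unit reduces to $1$ in characteristic $2$, $R$ reduces to $1$; factoring $R^2-1=(R-1)(R+1)$ and choosing the sign of $\sqrt{2^nb'i}$ so that $R\equiv 1$ rather than $-1$ then forces $v(R-1)\geq 1>\tfrac{1}{2}$. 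Combined with the first-order bound this gives $v(\gamma(d)-d')>v(e)$.

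I expect the main obstacle to be exactly this second-order comparison: one must control the two square-root terms to a precision one half-integer finer than their common leading valuation. The subtlety is that $R$ need not lie in $\rats_2$ — only $R^2$ is a $2$-adic unit — so one cannot argue by comparing integers but must work through $R^2$, exploiting both the sign from $\gamma(i)=i^{\chi(\gamma)}$ and the freedom in the choice of square root. This is precisely where the congruences relating $(a,b)$ to $(a',b')$ and the power-series estimates collected in \S\ref{Scomputations} carry the load.
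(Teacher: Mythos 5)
Your proof is correct, and while it shares the paper's overall skeleton---reduce, via Proposition \ref{P2fermatreduction}(iii)--(iv), to the single estimate $v(\gamma(d)-d') > n-\tfrac{s}{2}+\tfrac{1}{2}$, and split $\gamma(d)-d'$ into a rational part (handled by $a'\equiv\chi(\gamma)a$, $b'\equiv\chi(\gamma)b \pmod{2^n}$) and a square-root part---your treatment of the decisive square-root comparison is genuinely different. The paper approximates both $\gamma(d)$ and $d'$ by the common expression $\tfrac{a}{a+b}+\chi(\gamma)^{-3/2}\tfrac{\sqrt{2^nbi}}{(a+b)^2}$: it expands $\sqrt{2^nb'i}=\chi(\gamma)^{1/2}\sqrt{2^nbi}\sqrt{1+\tfrac{r}{\chi(\gamma)b}}$ binomially (using $s\geq 2$ to bound the correction factor) and separately determines the fourth root of unity $\zeta_\gamma$ with $\gamma(\sqrt{2^nbi})=\zeta_\gamma\sqrt{2^nbi}$, checking by cases on $\chi(\gamma)\bmod 4$ that $\zeta_\gamma\equiv\chi(\gamma)^{-3/2}\pmod 2$. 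You instead square the ratio $R$ of the two square-root terms, which kills all square-root ambiguities at once and lands in $\rats_2^{\times}$, where the verification becomes the elementary congruence $\epsilon\,\chi(\gamma)^{-1}\bigl(\tfrac{a'+b'}{a+b}\bigr)^4\equiv 1\pmod 4$; extracting $v(R-1)\geq 1$ from $v(R^2-1)\geq 2$ then recovers exactly the same final bound $v(\gamma(d)-d')\geq n-\tfrac{s}{2}+1$ that the paper gets. Your route buys freedom from half-integral powers of $\chi(\gamma)$ and from tracking which square root is which; the paper's buys an explicit normal form for $d'$ that makes the case analysis transparent.

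Three small remarks. First, your aside that ``every odd unit reduces to $1$ in characteristic $2$'' is false when the residue field $k$ is larger than $\mathbb{F}_2$, but it is also unnecessary: $v(R^2-1)\geq 2$ alone forces $v(R-1)\geq 1$, since $(R+1)-(R-1)=2$ prevents both factors of $R^2-1$ from having valuation less than $1$. In particular the bound holds for \emph{either} sign of $\sqrt{2^nb'i}$, which is what the lemma's ``either choice'' clause requires and what your sign-choosing phrasing slightly obscures (alternatively: the two candidate $d'$ differ by $2\sqrt{2^nb'i}/(a'+b')^2$, of valuation $n-\tfrac{s}{2}+1>v(e)$). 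Second, your closing attribution of the load to the power-series estimates of \S\ref{Scomputations} is misplaced: those feed Proposition \ref{Pdiffformratio}, not this lemma, whose proof needs only the elementary congruences you wrote down. Third, your opening observation that monodromy must fix $\ol{X}_b$ by the uniqueness in Proposition \ref{P2fermatreduction}(i), so that $\ol{\gamma(d)}$ is a well-defined point of $\ol{X}_b^{sm}$, is a step the paper leaves implicit; making it explicit is a genuine improvement.
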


\begin{proof}
We first claim that 
\begin{equation}\label{E0}
d' \equiv \frac{a}{a+b} + \chi(\gamma)^{-3/2}\frac{\sqrt{2^nbi}}{(a+b)^2} \pmod{2^n},
\end{equation}
where $\sqrt{2^nbi}$ is chosen as in the definition of $d$, as long as the square root of $\chi(\gamma)$ is chosen correctly.
One verifies easily that 
\begin{equation}\label{E1}
\frac{a'}{a'+b'} \equiv \frac{a}{a+b} \pmod{2^n}.
\end{equation} 
One also sees easily that 
\begin{equation}\label{E2}
\frac{\sqrt{2^nb'i}}{(a'+b')^2} \equiv \frac{1}{\chi(\gamma)^2}\frac{\sqrt{2^nb'i}}{(a+b)^2} \pmod{2^n}.
\end{equation}
Now, 
\begin{equation}\label{E3}
\sqrt{2^nb'i} = \sqrt{2^n(\chi(\gamma)b + r)i} = \chi(\gamma)^{1/2}\sqrt{2^nbi}\sqrt{1 + \frac{r}{\chi(\gamma)b}},
\end{equation} where
$r$ is some integer divisible by $2^n$, where $\sqrt{1 + \frac{r}{\chi(\gamma)b}}$ is chosen to be no further from $1$ than from $-1$, where $\sqrt{2^nbi}$ is
chosen as in the definition of $d$, and where $\chi(\gamma)^{1/2}$ is 
chosen to make the equality work.  But $v\left(\frac{r}{\chi(\gamma)b}\right) \geq s$, and thus
$$v\left(\sqrt{1 + \frac{r}{\chi(\gamma)b}} - 1 \right) \geq s-1 \geq \frac{s}{2}$$ (recall that we assume $s \geq 2$).
Since $v(\sqrt{2^nbi}) = n - \frac{s}{2}$, it follows from (\ref{E3}) that $$\sqrt{2^nb'i} \equiv \chi(\gamma)^{1/2}\sqrt{2^nbi} \pmod{2^n}.$$
Combining this with (\ref{E1}) and (\ref{E2}) proves the claim.

Now, $$\gamma(d) = \frac{a}{a+b} + \zeta_{\gamma} \frac{\sqrt{2^nbi}}{(a+b)^2},$$ where $\zeta_{\gamma}$ is a fourth root 
of unity that depends on $\gamma$.  
In particular, 
$$\zeta_{\gamma} = \begin{cases} \pm i & \chi(\gamma) \equiv 3 \pmod{4} \\ \pm 1  & \chi(\gamma) \equiv 1 \pmod{4}.\end{cases}$$
In both cases, one computes that $\zeta_{\gamma} \equiv \chi(\gamma)^{-3/2} \pmod{2}$. 
So 
\begin{equation}\label{E4}
\gamma(d) \equiv \frac{a}{a+b} + \chi(\gamma)^{-3/2}\frac{\sqrt{2^nbi}}{(a+b)^2} \pmod{2^{n-\frac{s}{2} + 1}}.
\end{equation}
Combining this with (\ref{E0}), we obtain that $$\gamma(d) \equiv d' \pmod{2^{\min(n, n - \frac{s}{2} +1)}}.$$
Since $s \geq 2$, this implies $$\gamma(d) \equiv d' \pmod{2^{n - \frac{s}{2} + 1}}.$$
By Proposition \ref{P2fermatreduction}(iv), $\gamma(d)$ and $d'$ specialize to the same point, and we are done.
\end{proof}

\begin{remark}\label{Rdval}
Note that $v(d) = v(d') = 0$ and $v(d-1) = v(d'-1) = n-s.$
\end{remark}

Combining Proposition \ref{P2fermatreduction} and Lemma \ref{Lgaloisaction}, and using the definitions of $d$, $d'$, $e$, and $\gamma$ therein, we obtain:
\begin{corollary}\label{Cparameters}
If $x = d+et$, then $t$ is a parameter for $\Spec \hat{\mc{O}}_{X^{st}, \ol{d}}$.  Likewise, if
$x = d' + et'$, then $t'$ is a parameter for $\Spec \hat{\mc{O}}_{X^{st}, \ol{\gamma(d)}}$.
\end{corollary}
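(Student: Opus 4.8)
The plan is to deduce the corollary directly from the general description of the completed local rings recorded in the paragraph following Proposition \ref{P2fermatreduction}: for any $K$-point $w$ lying in the closed disc of Proposition \ref{P2fermatreduction}(iii), setting $x = w + et$ makes $t$ a parameter for $\hat{\mc{O}}_{X^{st}, \ol{w}}$. Thus everything reduces to verifying that each of $d$ and $d'$ is a $K$-point in that closed disc and to identifying its specialization with the $k$-point named in the statement. No new geometry is needed; the whole content is locating $d$ and $d'$ inside the disc.

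For the first assertion I would take $w = d$. By Proposition \ref{P2fermatreduction}(iii) the point $d$ is the \emph{center} of the closed disc of radius $|e|$, so it trivially lies in the disc and its reduction $\ol{d}$ is a $k$-point of $\ol{X}_b^{sm}$. Applying the general description with $w = d$ immediately gives that $t$ defined by $x = d + et$ is a parameter for $\Spec \hat{\mc{O}}_{X^{st}, \ol{d}}$.

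For the second assertion I would extend $K$ (as the section permits) so that $d' \in K$, and take $w = d'$. The crucial input is Lemma \ref{Lgaloisaction}, which gives $\ol{\gamma(d)} = \ol{d'}$. To apply the general description I must know that $d'$ lies in the closed disc of Proposition \ref{P2fermatreduction}(iii), and by the characterization in (iii) this is equivalent to $\ol{d'}$ lying on $\ol{X}_b^{sm}$. Now the monodromy action on $\ol{f}$ is the reduction of the Galois action on $f$, so $\ol{\gamma(d)} = \gamma(\ol{d})$; since $\ol{X}_b$ is the \emph{unique} component of the required type (Proposition \ref{P2fermatreduction}(i)), it is preserved by the monodromy action, as is its smooth locus, whence $\ol{\gamma(d)} = \gamma(\ol{d}) \in \ol{X}_b^{sm}$. (Alternatively, the congruence estimates from the proof of Lemma \ref{Lgaloisaction} give $v(\gamma(d) - d) \geq v(e)$ directly.) Hence $\ol{d'} \in \ol{X}_b^{sm}$, so $d'$ lies in the closed disc, and the general description with $w = d'$ shows that $t'$ defined by $x = d' + et'$ is a parameter for $\Spec \hat{\mc{O}}_{X^{st}, \ol{d'}} = \Spec \hat{\mc{O}}_{X^{st}, \ol{\gamma(d)}}$.

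The main obstacle is not in the corollary itself, which is pure bookkeeping, but in the comparison already supplied by Lemma \ref{Lgaloisaction}: that lemma is what matches the \emph{a priori} purely algebraic point $d'$ (given by a closed-form expression in $a'$, $b'$, and $i$) with the monodromy image $\gamma(d)$ after reduction, via the $2$-adic congruence $\gamma(d) \equiv d' \pmod{2^{n - \frac{s}{2} + 1}}$ and the equality $v(e) = n - \frac{s}{2} + \frac{1}{2}$. Once that reduction-compatible comparison and Proposition \ref{P2fermatreduction} are in hand, the corollary follows immediately, so I expect no further difficulty.
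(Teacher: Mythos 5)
Your proposal is correct and takes essentially the same route as the paper, which offers no separate proof but presents the corollary as an immediate combination of the parameter description in the paragraph following Proposition \ref{P2fermatreduction} with the identification $\ol{\gamma(d)} = \ol{d'}$ from Lemma \ref{Lgaloisaction}---exactly your two steps. Your explicit check that $d'$ (equivalently $\gamma(d)$) lies in the closed disc of Proposition \ref{P2fermatreduction}(iii) fills in a point the paper leaves implicit (it is already tacitly used when Proposition \ref{P2fermatreduction}(iv) is invoked at the end of the proof of Lemma \ref{Lgaloisaction}), and both of your justifications for it---monodromy preserving the distinguished component $\ol{X}_b^{sm}$, or the direct estimate $v(\gamma(d)-d) \geq v(e)$---are valid.
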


\subsection{Differential forms}\label{Sdiffforms}
Maintain the notation of \S\ref{Smonodromy}, including $d$, $d'$, $e$, and $\gamma$.  
All De Rham cohomology groups will be assumed to have coefficients in $K$.

As in \S\ref{Sfrob}, let $q = (\rho, \sigma, \tau) \in (\rats/\ints)^3,$ such that $\rho+\sigma+\tau = 0$.  Furthermore, suppose 
$\fracpart{\rho} = \frac{a}{2^n}$ with $a$ odd and $\fracpart{\sigma} = \frac{b}{2^n}$ with $1 \leq v(b) \leq n-2$.  
Set $\epsilon_q  = \fracpart{\rho} + \fracpart{\sigma} + \fracpart{\tau} - 1$.
Let $F_{2^n}$ be the Fermat curve given by $u^{2^n} + v^{2^n} = 1$, defined over $\rats_2$, and let $J_{2^n}$ be its Jacobian. 
Let $\omega_{2^n, q}$ be the element of $H^1_{DR}(F_{2^n}) \cong H^1_{DR}(J_{2^n})$ given by the differential form 
$$\eta_{2^n, q} = 2^n\fracpart{\rho+\sigma}^{\epsilon_q}u^av^b\frac{v}{u}d\left(\frac{u}{v}\right).$$
Recall that this is the pullback of a cohomology class $\omega_q$ on a
rational factor $J_q$ of $J_{2^n}$.
One can rewrite $\eta_{2^n, q}$ as $$\fracpart{\rho+\sigma}^{\epsilon_q}u^{a-2^n}v^{b-2^n}d(u^{2^n})$$ (cf.\ \cite[(1.2)]{Co:fm}).
Making the substitution $y = u^av^b$ and $x = u^{2^n}$ shows that $\eta_{2^n, q}$ (and thus
$\omega_{2^n, q}$) descends to the curve $Y$ given by the equation $y^{2^n} = x^a(x-1)^b$ (which we will also call $F_{2^n, a, b}$), 
and is given in $(x,y)$-coordinates by

$$\eta_{2^n, q} = \frac{\fracpart{\rho+\sigma}^{\epsilon_q}}{x(1-x)}ydx.$$

If $\gamma \in I_{\rats_2}$, then $\fracpart{\gamma \rho} = \frac{a'}{2^n}$ and $\fracpart{\gamma \sigma} = \frac{b'}{2^n}$, where $a'$ and $b'$ are as in Lemma \ref{Lgaloisaction}.  Letting $\gamma \in I_{\rats_2}$ act on $(\rats/\ints)^3$ diagonally via the cyclotomic character, 
we define $\eta_{2^n, \gamma q}$, $\omega_{2^n, \gamma q}$, and $\omega_{\gamma q}$ as above.  Now,
$\eta_{2^n, \gamma q}$ (and thus $\omega_{2^n, \gamma q}$) descends to the curve $F_{2^n, a', b'}$ given by the equation $(y')^{2^n} = x^{a'}(x-1)^{b'}$, 
where $y' = u^{a'}v^{b'}$. 
Then $\eta_{2^n, \gamma q}$ is given in $(x, y')$-coordinates by

$$\eta_{2^n, \gamma q} = \frac{\fracpart{\gamma \rho + \gamma \sigma}^{\epsilon_{\gamma q}}}{x(1-x)}y'dx.$$

Note that we can identify $F_{2^n, a', b'}$ with $F_{2^n, a, b}$ via $y' = y^hx^j(1-x)^k$, where $h$, $j$, and $k$ are such that
$a' = ha + 2^nj$ and $b' = hb + 2^nk$.

Recall from (\ref{Epullback}) that, for each $\gamma \in I_{\rats_2}$, there exists $\beta_{\gamma}(q) \in K$ (after a possible finite extension of $K$) 
such that $\gamma^*\omega_{2^n, q} = \beta_{\gamma}(q)\omega_{2^n, \gamma q}$ in $H^1_{DR}(J_{2^n})$.  We will compute $\beta_{\gamma}(q)$
by viewing $\omega_{2^n, q}$ and $\omega_{2^n, \gamma q}$ as cohomology classes on $F_{2^n, a, b} = F_{2^n, a', b'}$.

The following proposition relies on calculations from \S\ref{Scomputations}.
\begin{proposition}[cf.\ \cite{Co:fm}, Corollary 7.6]\label{Pdiffformratio}
We have $$v(\beta_{\gamma}(q)) = v(\fracpart{\rho})(\fracpart{\rho} - \fracpart{\gamma \rho}) + v(\fracpart{\sigma})(\fracpart{\sigma} - \fracpart{\gamma \sigma})
+ v(\fracpart{\tau})(\fracpart{\tau} - \fracpart{\gamma \tau}).$$
\end{proposition}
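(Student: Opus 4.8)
The plan is to follow the method of Coleman's \cite[Corollary 7.6]{Co:fm}: compute $\beta_\gamma(q)$ as the ratio of the reductions of $\gamma^*\omega_{2^n, q}$ and $\omega_{2^n, \gamma q}$ on the unique component $\ol{X}_b$ of the stable reduction over which $\ol{f}$ is \'{e}tale (Proposition \ref{P2fermatreduction}), expressed in the good parameters supplied by Corollary \ref{Cparameters}. Because $\ol{X}_b$ is the \emph{unique} \'{e}tale component, the monodromy action of $I_{\rats_2}$ stabilizes it; hence $\gamma$ carries the disc around $\ol{d}$ to the disc around $\ol{\gamma(d)} = \ol{d'}$ (Lemma \ref{Lgaloisaction}), and both $\gamma^*\omega_{2^n, q}$ and $\omega_{2^n, \gamma q}$ restrict to differentials on $\ol{X}_b$ supported near $\ol{d'}$. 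As these reductions are nonzero scalar multiples of the same differential on $\ol{X}_b$, their ratio is $\beta_\gamma(q)$, and it remains to compute the valuation of each normalizing scalar.

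First I would compute the reduction of $\eta_{2^n, q}$. Rewriting it as $\eta_{2^n, q} = \pm \fracpart{\rho+\sigma}^{\epsilon_q}\, x^{\fracpart{\rho} - 1}(x-1)^{\fracpart{\sigma} - 1}\,dx$ on $F_{2^n, a, b}$ and substituting $x = d + et$, I expand via the binomial series and isolate, using $v(d) = 0$, $v(d-1) = n - s$ (Remark \ref{Rdval}) and $v(e) = n - \tfrac{s}{2} + \tfrac{1}{2}$ (Proposition \ref{P2fermatreduction}), a scalar $\lambda_q \in K$ for which $\eta_{2^n, q}/\lambda_q$ has nonzero reduction on $\ol{X}_b$; this is exactly the content of the power series estimates collected in \S\ref{Scomputations}. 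I would run the same computation for $\eta_{2^n, \gamma q}$ in the parameter $t'$ (with $x = d' + et'$), after transporting it to $F_{2^n, a, b}$ via the identification $y' = y^h x^j (1-x)^k$; since $a' = ha + 2^n j$ and $b' = hb + 2^n k$, this turns $\eta_{2^n, \gamma q}$ into $\pm \fracpart{\gamma\rho + \gamma\sigma}^{\epsilon_{\gamma q}}\, x^{\fracpart{\gamma\rho} - 1}(x-1)^{\fracpart{\gamma\sigma} - 1}\,dx$, producing a scalar $\lambda_{\gamma q}$. Comparing the monodromy image $\gamma^*(\eta_{2^n, q}/\lambda_q)$ with $\eta_{2^n, \gamma q}/\lambda_{\gamma q}$ on $\ol{X}_b$ then gives $\beta_\gamma(q)$: because $\gamma(d) \equiv d' \pmod{2^{n - s/2 + 1}}$ (from the proof of Lemma \ref{Lgaloisaction}) and $v(e) < n - \tfrac{s}{2} + 1$, the parameters $t$ and $t'$ are matched by $\gamma$ up to a substitution reducing to a unit rescaling of the coordinate on $\ol{X}_b$, which contributes $0$ to the valuation; thus $v(\beta_\gamma(q)) = v(\lambda_q) - v(\lambda_{\gamma q})$.

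The last step is to assemble $v(\lambda_q) - v(\lambda_{\gamma q})$ into the stated sum. The three factors of $\eta_{2^n, q}$ localize at the branch points $0$, $1$, $\infty$, associated to $\rho$, $\sigma$, $\tau$ respectively, and I expect $v(\lambda_q)$ to split as a contribution $v(\fracpart{\xi})\fracpart{\xi}$ from each point $\xi$, together with a piece unchanged when $q$ is replaced by $\gamma q$. Since $\chi(\gamma) \in \ints_2^{\times}$, the $2$-adic valuations of the numerators of $\fracpart{\rho}$, $\fracpart{\sigma}$, $\fracpart{\tau}$ are preserved by $\gamma$, so $v(\fracpart{\gamma\xi}) = v(\fracpart{\xi})$; once the unchanged pieces cancel in the difference, each branch point contributes $v(\fracpart{\xi})(\fracpart{\xi} - \fracpart{\gamma\xi})$, which is the claimed formula. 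I expect the main obstacle to be this first step --- the explicit power series reduction producing $\lambda_q$ --- and within it, correctly extracting the contributions at $x = 0$ and $x = \infty$: these are not local to the disc centered near $x = 1$ (recall $v(d) = 0$ while $v(d-1) = n - s \geq 1$, so the disc sits near the branch point $x = 1$), and must be recovered from the global behavior of $\eta_{2^n, q}$ on the full Fermat curve $F_{2^n}$, via $\rho + \sigma + \tau = 0$, rather than from the disc near $\ol{d}$ alone.
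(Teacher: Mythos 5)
Your computational skeleton matches the paper's: work on $F_{2^n, a, b} = F_{2^n, a', b'}$, expand in the parameters $t$ and $t'$ supplied by Corollary \ref{Cparameters}, use $v(d) = 0$, $v(d-1) = n-s$, $v(e) = n - \frac{s}{2} + \frac{1}{2}$ together with the power series estimates of \S\ref{Scomputations}, and the final bookkeeping (with $v(\fracpart{\rho}) = v(\fracpart{\tau}) = -n$ and $v(\fracpart{\sigma}) = -s$, and $\epsilon_q = \fracpart{\rho} + \fracpart{\sigma} + \fracpart{\tau} - 1$) would indeed land on the stated formula. But there is a genuine gap at your central step, where you declare $v(\beta_{\gamma}(q)) = v(\lambda_q) - v(\lambda_{\gamma q})$ by comparing \emph{reductions} of the scaled forms on $\ol{X}_b$. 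The relation $\gamma^*\omega_{2^n, q} = \beta_{\gamma}(q)\,\omega_{2^n, \gamma q}$ is an identity of classes in $H^1_{DR}$, not of differential forms: the two sides may differ by an exact differential $dF$, and after dividing by $\lambda_{\gamma q}$ (whose valuation is large) the expansion of $dF$ on the disc can dominate the constant term, so the reductions of $\gamma^*(\eta_{2^n, q}/\lambda_q)$ and $\eta_{2^n, \gamma q}/\lambda_{\gamma q}$ need not agree, nor even both be nonzero; nothing in your argument controls this discrepancy. The paper supplies exactly the missing mechanism, namely Coleman's \cite[Theorem 4.1]{Co:fm}: writing $\eta_{2^n, q} = \sum_{\ell} z_\ell t^\ell dt$ and $\eta_{2^n, \gamma q} = \sum_{\ell} z'_\ell (t')^\ell dt'$, one has $v(\beta_{\gamma}(q)) = \lim_i v(z_{\ell_i}/z'_{\ell_i})$ along \emph{any} sequence with $v(z_{\ell_i}) - v(\ell_i + 1) \to -\infty$. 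The point of that hypothesis is that if $F$ is bounded on the disc, the $\ell$-th coefficient of $dF$ is $(\ell+1)f_{\ell+1}$, of valuation at least $v(\ell+1)$ minus a bounded constant, hence invisible along such a sequence. This is where the full strength of Corollary \ref{Cdiffform} is needed: $v(\tilde{a}_\ell) = \frac{1}{2}S(\ell)$ for \emph{all} $\ell$, so that the choice $\ell_i = 2^i - 1$ gives $v(z_{\ell_i}) = \frac{i}{2} + O(1)$ while $v(\ell_i + 1) = i$. Your proposal uses only the constant term of $\tilde{\alpha}$, which does not suffice to justify the identification of $\beta_{\gamma}(q)$.

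A secondary point: the obstacle you flag in your last paragraph --- recovering ``global'' contributions at $x = 0$ and $x = \infty$ from outside the disc --- does not arise in the paper's proof. Everything is read off from the single disc around $\ol{d}$: the $\rho$- and $\tau$-contributions enter through the explicit constants $\fracpart{\rho+\sigma}^{\epsilon_q}$, $d^{\fracpart{\rho}-1}$, and $e$ in (\ref{Efirstomega})--(\ref{Esecondomega}), and then through the algebraic rearrangement using $\rho + \sigma + \tau = 0$; no global argument on $F_{2^n}$ is needed. Moreover, since the coefficient valuations of $\tilde{\alpha}$ and $\tilde{\alpha}'$ coincide (Remark \ref{Rindependence}), the difference $v(z_{\ell_i}) - v(z'_{\ell_i})$ is independent of $i$, which incidentally explains why your leading-term arithmetic would produce the correct number even though the step equating it with $v(\beta_{\gamma}(q))$ is unjustified without \cite[Theorem 4.1]{Co:fm} or an equivalent control of exact forms.
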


\begin{proof}
We work with the representatives $\eta_{2^n, q}$ and $\eta_{2^n, \gamma q}$ of $\omega_{2^n, q}$ and $\omega_{2^n, \gamma q}$ on the curve
$F_{2^n, a, b} = F_{2^n, a', b'}$.

If $x = d+et$, then Proposition \ref{Pasymptoticvaluation} defines (after a possible finite extension of $R$) a power series
$\alpha(t) \in R[[t]]$ such that $$\alpha(t)^{2^n} = x^a(x-1)^bd^{-a}(d-1)^{-b}$$ (Remark \ref{Rreminder}).  Corollary \ref{Cdiffform} defines
$\tilde{\alpha}(t) = \frac{d(1-d)\alpha(t)}{x(1-x)} \in R[[t]]$ (after substituting $x = d+et$), and shows that the valuation of the coefficient of 
$t^{\ell}$ in $\tilde{\alpha}(t)$ is 
$\frac{1}{2}S(\ell)$, the number of ones in the base $2$ expansion of $\ell$.  Since $y^{2^n} = x^a(x-1)^b$, we have
\begin{equation}\label{Efirstomega}
\eta_{2^n, q} = \frac{\sqrt[2^n]{d^a(d-1)^b}\fracpart{\rho + \sigma}^{\epsilon_q} \tilde{\alpha}(t)}{d(1-d)} \, e\, dt 
= \mu d^{\fracpart{\rho} - 1} (d-1)^{\fracpart{\sigma} - 1} \fracpart{\rho + \sigma}^{\epsilon_q} \tilde{\alpha}(t) e\, dt,
\end{equation}
where $\mu$ is some root of unity and $d^{\fracpart{\rho}}$, $(d-1)^{\fracpart{\sigma}}$. are calculated using some choices of $2^n$th roots.

Likewise, letting $d'$ be as in Lemma \ref{Lgaloisaction} and setting $x' = d' + et'$, we have
\begin{equation}\label{Esecondomega}
\eta_{2^n, \gamma q} 
= \mu' (d')^{\fracpart{\gamma \rho}-1} (d'-1)^{\fracpart{\gamma \sigma}-1} \fracpart{\gamma \rho + \gamma \sigma}^{\epsilon_{\gamma q}} 
\tilde{\alpha}'(t') e\, dt',
\end{equation} where $\mu'$ is some root of unity, and $\tilde{\alpha}'(t')$ is some power series in $t'$ whose coefficients have the \emph{same} valuations
as the coefficients of $\tilde{\alpha}(t)$ (Remark \ref{Rindependence}).

By Corollary \ref{Cparameters}, $t$ (resp.\ $t'$) is a parameter for $\Spec \hat{\mc{O}}_{X^{st}, \ol{d}}$ 
(resp.\ $\Spec \hat{\mc{O}}_{X^{st}, \ol{\gamma(d)}}).$  Since the map $Y^{st} \to X^{st}$ is completely split above $\ol{d}$ (Proposition \ref{P2fermatreduction}(ii)), 
we can also view
$t$ as a parameter for $\Spec \hat{\mc{O}}_{Y^{st}, \ol{u}}$ for any point $\ol{u} \in \ol{Y}$ above $\ol{d}$.  Then $t'$ can be viewed as
a parameter for $\Spec \hat{\mc{O}}_{Y^{st}, \gamma(\ol{u})}$.
Write $\eta_{2^n, q} = \sum_{\ell=0}^{\infty} z_\ell t^\ell dt$ and  
$\eta_{2^n, \gamma q} = \sum_{\ell=0}^{\infty} z'_\ell (t')^\ell dt'$.
By \cite[Theorem 4.1]{Co:fm} (setting $q=1$ in that theorem), 
$$v(\beta_{\gamma}(q)) = \lim_{i \to \infty} v\left(\frac{z_{\ell_i}}{z'_{\ell_i}}\right),$$ where $\ell_i$ is any sequence such that 
$\lim_{i \to \infty} v(z_{\ell_i}) - v(\ell_i + 1) = -\infty$. Take $\ell_i = 2^i - 1$.  Then, by  Remark \ref{Rdval}, Corollary \ref{Cdiffform}, 
(\ref{Efirstomega}), and (\ref{Esecondomega}), we have 
$$v(z_{\ell_i}) =  (n-s) (\fracpart{\sigma} - 1) - n \epsilon_q + \frac{i}{2} + (n - \frac{s}{2} + \frac{1}{2})$$ and
$$v(z'_{\ell_i}) =  (n-s) (\fracpart{\gamma \sigma} - 1) - n \epsilon_{\gamma q} + \frac{i}{2} + (n - \frac{s}{2} + \frac{1}{2}).$$
So $$v(\beta_{\gamma}(q)) = (n-s)(\fracpart{\sigma} - \fracpart{\gamma \sigma}) + n (\epsilon_{\gamma q} - \epsilon_q).$$ 
Some rearranging shows that this is equal to
$$n(\fracpart{\gamma \rho} - \fracpart{\rho}) + s(\fracpart{\gamma \sigma} - \fracpart{\sigma}) + n(\fracpart{\gamma \tau} - \fracpart{\tau}),$$
which is equal to the expression in the proposition.
\end{proof}

\subsection{Finishing the product formula}\label{Sformula}

If $\gamma \in I_{\rats_2}$ and $r \in \rats/\ints$, then let
$w_{2, \gamma}(r) = w_2(a_r) - w_2(a_{\gamma r})$, where the terms on the right hand side are defined in \S\ref{Sfrob}.  The following result is an important 
consequence of Proposition \ref{Pdiffformratio}.
\begin{corollary}\label{Ctransition}
Let $\gamma \in I_{\rats_2}$.
If $q = (\rho, \sigma, \tau) \in (\rats/\ints)^3$ with $\rho + \sigma + \tau = 0$, 
and none of $\fracpart{\rho}$, $\fracpart{\sigma}$, or $\fracpart{\tau}$ is $\frac{1}{2}$, then 
$w_{2, \gamma}(\rho) + w_{2, \gamma}(\sigma) + w_{2, \gamma}(\tau) = 0$.
\end{corollary}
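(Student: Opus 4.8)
The plan is to collapse the left-hand side into a single difference of values of $w_2$ and then evaluate that difference using the defining property $w_2(b_q) = v_2(\omega_q) - V_2(q)$, Lemma \ref{Laux}, and Proposition \ref{Pdiffformratio}. Since $w_2$ is linear, $b_q = a_\rho + a_\sigma + a_\tau$, and $b_{\gamma q} = a_{\gamma\rho} + a_{\gamma\sigma} + a_{\gamma\tau}$, the definition $w_{2,\gamma}(r) = w_2(a_r) - w_2(a_{\gamma r})$ gives immediately $w_{2,\gamma}(\rho) + w_{2,\gamma}(\sigma) + w_{2,\gamma}(\tau) = w_2(b_q) - w_2(b_{\gamma q})$. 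Applying the defining property of $w_2$ to each term and then Lemma \ref{Laux} rewrites this as $v_2(\beta_\gamma(q)) - \bigl(V_2(q) - V_2(\gamma q)\bigr)$, where I read $V_2(q)$ as $V_2(\rho) + V_2(\sigma) + V_2(\tau)$. Thus the corollary reduces to the single identity $v_2(\beta_\gamma(q)) = V_2(q) - V_2(\gamma q)$. (If one of $\rho,\sigma,\tau$ is $0$, say $\sigma=0$, the statement is immediate: then $\tau=-\rho$, $w_{2,\gamma}(\sigma)=0$, and $a_{-\rho}=-a_\rho$ forces $w_{2,\gamma}(\tau)=-w_{2,\gamma}(\rho)$; so assume all three are nonzero.)

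The right-hand side I would compute directly from the definition of $V_2$. The key point is that $\gamma \in I_{\rats_2}$ acts on $\rats/\ints$ through the cyclotomic character $\chi(\gamma)\in\ints_2^\times$: it fixes every element of odd order and multiplies the $2$-primary part by a $2$-adic unit. Hence $v_2(\gamma r) = v_2(r)$ and $(\gamma r)_{(2)} = r_{(2)}$ for every $r$, so the \emph{entire} second term of $V_2(r)$ is $\gamma$-invariant and cancels in $V_2(r) - V_2(\gamma r)$. What survives is $v_2(r)(\fracpart{r} - \fracpart{\gamma r}) = v_2(\fracpart{r})(\fracpart{r} - \fracpart{\gamma r})$: the two agree because $v_2(r) = v_2(\fracpart{r})$ when $v_2(\fracpart r)<0$, while if $r \in \ints_{(2)}/\ints$ both sides vanish (there $\fracpart{\gamma r} = \fracpart{r}$). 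Summing over the three components yields $V_2(q) - V_2(\gamma q) = v_2(\fracpart{\rho})(\fracpart{\rho}-\fracpart{\gamma\rho}) + v_2(\fracpart{\sigma})(\fracpart{\sigma}-\fracpart{\gamma\sigma}) + v_2(\fracpart{\tau})(\fracpart{\tau}-\fracpart{\gamma\tau})$, which is exactly the expression for $v_2(\beta_\gamma(q))$ in Proposition \ref{Pdiffformratio}. The two cancel, and the sum is $0$.

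To invoke Proposition \ref{Pdiffformratio} I must first put $q$ into the normalized shape of \S\ref{Smonodromy}, and this is clean when $\rho,\sigma,\tau$ all have $2$-power order. The statement is symmetric in the three, so I may relabel freely. Because $\rho+\sigma+\tau=0$, the largest power of $2$ dividing the three orders is attained at least twice, and it cannot be attained three times (a sum of three odd numerators is odd, hence nonzero modulo that power of $2$); the hypothesis that no $\fracpart{\cdot}$ equals $\tfrac12$ then forces the remaining exponent to be at least $2$. After relabeling this is precisely $\fracpart{\rho} = a/2^n$ with $a$ odd, $\fracpart{\sigma} = b/2^n$ with $1 \le v_2(b) \le n-2$, and $\fracpart{\tau}$ again of order $2^n$, so Proposition \ref{Pdiffformratio} applies and the cancellation of the previous paragraph completes the proof in this case.

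The main obstacle is the case in which some component of $q$ has order divisible by an odd prime, since the stable-reduction input (Proposition \ref{P2fermatreduction}), and with it Proposition \ref{Pdiffformratio}, is developed only for the $2^n$-covers $y^{2^n} = x^a(x-1)^b$. Note that the $V_2$-computation of the second paragraph is already valid for arbitrary $r$ and uses the full fractional parts, so the entire burden is to show that $v_2(\beta_\gamma(q))$ obeys the same formula $\sum_{\xi} v_2(\fracpart{\xi})(\fracpart{\xi}-\fracpart{\gamma\xi})$, with full fractional parts, for general $q$. I would approach this by reducing to the $2$-primary situation: since $\gamma$ fixes the prime-to-$2$ torsion, the prime-to-$2$ part of the Fermat cover is tame over $\rats_2$ and the $2$-adic valuation of the period ratio $\beta_\gamma(q)$ should still be controlled by the local analysis at the single component $\ol{X}_b$. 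Making this precise — in particular verifying that the odd part does not disturb the parameter and asymptotic-valuation computations of \S\ref{Smonodromy}–\S\ref{Sdiffforms}, so that Proposition \ref{Pdiffformratio}'s formula persists verbatim — is the delicate step, and is where I expect the real work to lie.
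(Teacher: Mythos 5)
Your handling of the $2$-power case is essentially the paper's argument: the collapse of the left-hand side to $w_2(b_q) - w_2(b_{\gamma q})$, the rewriting via Lemma \ref{Laux} as $V_2(\gamma q) - V_2(q) + v(\beta_\gamma(q))$ up to sign, the observation that the second term of $V_2$ is $\gamma$-invariant (indeed, in the $2$-power-denominator case $\rho_{(2)}$, $\sigma_{(2)}$, $\tau_{(2)}$ and their $\gamma$-translates are all zero, which is how the paper phrases it), and the normalization showing that after relabeling one has $\fracpart{\rho} = a/2^n$ with $a$ odd and $\fracpart{\sigma} = b/2^n$ with $1 \leq v_2(b) \leq n-2$, so that Proposition \ref{Pdiffformratio} applies and cancels $V_2(q) - V_2(\gamma q)$ exactly. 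The gap is the one you flag yourself: components whose order is divisible by an odd prime. But your proposed repair points in the wrong direction. The paper does \emph{not} extend Proposition \ref{Pdiffformratio} or any of the stable-reduction analysis to covers $y^m = x^a(x-1)^b$ with $m$ not a power of $2$; instead it quotes two results of Colmez. First, \cite[Lemme III.2.5]{Co:pv} already proves the corollary whenever one of $\rho$, $\sigma$, $\tau$ lies in $\ints_{(2)}/\ints$ (this also subsumes your separate treatment of the case $\sigma = 0$). Second, \cite[Lemme III.2.6]{Co:pv} gives the invariance $w_{2,\gamma}(\alpha) = w_{2,\gamma}(\alpha')$ whenever $\alpha - \alpha' \in \ints_{(2)}/\ints$; replacing each of $\rho$, $\sigma$, $\tau$ by its unique representative with $2$-power denominator modulo $\ints_{(2)}/\ints$ preserves the relation $\rho' + \sigma' + \tau' = 0$ and transports the entire statement to the case you solved.

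The reason this matters is that the route you sketch in your last paragraph --- proving that the formula of Proposition \ref{Pdiffformratio} ``persists verbatim'' for $v_2(\beta_\gamma(q))$ with general $q$ --- would require controlling the stable model and monodromy at $p=2$ for cyclic covers of degree $2^n m'$ with $m'$ odd, i.e., substantially more geometric input than the paper ever develops; the whole point of Colmez's Lemme III.2.6 (which encodes the tameness of the odd part of the cover at $2$) is to make that unnecessary. So the missing idea is not a technical verification you could grind out along the lines you describe, but a quotable reduction lemma: the invariance of $w_{2,\gamma}$ under translation by $\ints_{(2)}/\ints$, which you were in effect attempting to reprove from scratch at the level of period ratios. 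With that lemma cited, your first three paragraphs constitute a complete and correct proof, in the same structure as the paper's.
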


\begin{proof} 
This has already been proven in \cite[Lemme III.2.5]{Co:pv} when any of $\rho$, $\sigma$, or $\tau$ is in $\ints_{(2)}/\ints$, so we assume 
otherwise.  Furthermore, \cite[Lemme III.2.6]{Co:pv} states that $w_{2, \gamma}(\alpha) = w_{2, \gamma}(\alpha')$ whenever 
$\alpha - \alpha' \in \ints_{(2)}/\ints$.  For each $\alpha \in (\rats/\ints)$, there is a unique $\alpha' \in \rats/\ints$ such that 
$\alpha - \alpha' \in \ints_{(2)}/\ints$ and $\fracpart{\alpha'} = \frac{j}{k}$, where $k$ is a power of $2$.  
Furthermore, if $\rho + \sigma + \tau = 0$, then
$\rho' + \sigma' + \tau' = 0$.  So we may assume that the denominators of $\rho$, $\sigma$, and $\tau$ are powers of $2$.

Let $n$ be minimal such that $\fracpart{\rho} = \frac{a}{2^n}$, $\fracpart{\sigma} = \frac{b}{2^n}$, and $\fracpart{\tau} = \frac{c}{2^n}$, 
with $a$, $b$, $c \in \ints$.  Then $n \geq 3$.
Assume without loss of generality that $v_2(b) \geq \max(v_2(a), v_2(c))$.  Then $a$ and $c$ must be odd, and $1 \leq v(b) \leq n-2$ 
(cf.\ \S\ref{Sdiffforms}---recall that 
we assume that $\fracpart{\sigma} \notin \{0, \frac{1}{2}\}$).

One can then copy the proof of \cite[Lemme III.2.5]{Co:pv}, with our Proposition \ref{Pdiffformratio} substituting for \cite[Corollary 7.6]{Co:fm}.
In more detail, $$w_{2, \gamma}(\rho) + w_{2, \gamma}(\sigma) + w_{2, \gamma}(\tau) = w_2(b_q) - w_2(b_{\gamma q}).$$  
Using the definitions from \S\ref{Sfrob},  
this is equal to $$V_2(b_{\gamma q}) - V_2(b_q) + v_2(\omega_q) - v_2(\omega_{\gamma q}),$$ which is equal to
$$V_2(b_{\gamma q}) - V_2(b_q) - v(\beta_{\gamma}(q)),$$ by Lemma \ref{Laux}.  By Proposition \ref{Pdiffformratio} and the fact that $\rho_{(2)}$, 
$(\gamma \rho)_{(2)}$, $\sigma_{(2)}$, $(\gamma \sigma)_{(2)}$, $\tau_{(2)}$, and $(\gamma \tau)_{(2)}$ are all zero (Equation (\ref{Epdef})), this is equal to zero.
\end{proof}

\begin{corollary}\label{Czero}
For all $\gamma \in I_{\rats_2}$ and $r$ in $\rats/\ints$, we have $w_{2, \gamma}(r) = 0$.  
\end{corollary}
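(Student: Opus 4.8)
The plan is to reduce the claim to elements of $2$-power order in $\rats/\ints$ and then resolve everything by repeatedly applying the three-term relation of Corollary \ref{Ctransition}, the single genuinely delicate case being that of denominator $4$.

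First I would record the easy reductions. Since $I_{\rats_2}$ acts trivially on roots of unity of odd order, $\gamma r = r$ whenever $r$ has odd order, so $w_{2,\gamma}(r) = w_2(a_r) - w_2(a_{\gamma r}) = 0$ for such $r$; more generally, writing an arbitrary $r$ as the sum of its $2$-primary part $r_2$ and a part in $\ints_{(2)}/\ints$, the identity $w_{2,\gamma}(\alpha) = w_{2,\gamma}(\alpha')$ for $\alpha - \alpha' \in \ints_{(2)}/\ints$ from \cite[Lemme III.2.6]{Co:pv} gives $w_{2,\gamma}(r) = w_{2,\gamma}(r_2)$. Thus it suffices to treat $r$ of $2$-power order. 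The two smallest cases are immediate: $\chi(\gamma)$ is a $2$-adic unit, hence odd, so $\gamma$ fixes both $0$ and $\tfrac12$, giving $w_{2,\gamma}(0) = w_{2,\gamma}(\tfrac12) = 0$.

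Next I would extract two formal consequences of Corollary \ref{Ctransition} for $r$ of $2$-power order: applied to $(r, -r, 0)$ (legitimate once $\fracpart{r} \neq \tfrac12$) it yields oddness, $w_{2,\gamma}(-r) = -w_{2,\gamma}(r)$, and applied to $(r, r, -2r)$ it yields doubling, $w_{2,\gamma}(2r) = 2\,w_{2,\gamma}(r)$, valid whenever $\fracpart{r} \neq \tfrac12$ and $\fracpart{2r} \neq \tfrac12$. For $r = a/2^n$ with $a$ odd and $n \geq 3$ both exclusions are automatic, so $w_{2,\gamma}(a/2^n) = \tfrac12\,w_{2,\gamma}(a/2^{n-1})$, which lets me induct on the order, reducing every element of order $\geq 8$ to the cases of denominator $1$, $2$, or $4$. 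The crux is therefore denominator $4$, where I must show $w_{2,\gamma}(\tfrac14) = 0$. This is exactly where naive doubling breaks down, since $2 \cdot \tfrac14 = \tfrac12$ is the value excluded from Corollary \ref{Ctransition}; this is the concrete reason $p = 2$ is harder than odd $p$. To get around it I would climb to denominator $8$: writing $A = w_{2,\gamma}(\tfrac14)$, $B = w_{2,\gamma}(\tfrac18)$, $C = w_{2,\gamma}(\tfrac58)$, $D = w_{2,\gamma}(\tfrac34)$, Corollary \ref{Ctransition} applied to $(\tfrac14, \tfrac34, 0)$, $(\tfrac18, \tfrac18, \tfrac34)$, $(\tfrac58, \tfrac58, \tfrac34)$, and $(\tfrac14, \tfrac18, \tfrac58)$ gives $A + D = 0$, $2B + D = 0$, $2C + D = 0$, and $A + B + C = 0$. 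Substituting the first three into the last produces $2A = 0$, whence $w_{2,\gamma}(\tfrac14) = 0$ and, by oddness, $w_{2,\gamma}(\tfrac34) = 0$.

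With the base cases $0$, $\tfrac12$, $\tfrac14$, $\tfrac34$ settled and the doubling induction in hand, $w_{2,\gamma}$ vanishes on all elements of $2$-power order, and the preliminary reduction finishes the proof. The one step I expect to require real care is the denominator-$4$ case: one must verify that each of the four auxiliary triples at denominators $4$ and $8$ genuinely sums to $0$ and avoids the forbidden value $\tfrac12$, so that all four linear relations among $A$, $B$, $C$, $D$ are legitimate instances of Corollary \ref{Ctransition}. It is only their combination, not any single relation, that forces $A = 0$, which is precisely what makes the behavior at $\tfrac12$ the true obstacle.
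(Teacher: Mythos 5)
Your proof is correct, but it takes a genuinely different route from the paper's. The paper never reduces to $2$-power order: it works with an arbitrary even $k > 4$ and $a$ with $\fracpart{a} = \frac{1}{k}$, derives additivity $w_{2,\gamma}(a) + w_{2,\gamma}(b) = w_{2,\gamma}(a+b)$ from triples $(a, b, -(a+b))$ (valid when no fractional part is $\frac12$), proves $w_{2,\gamma}(ja) = j\,w_{2,\gamma}(a)$ for all $1 \leq j \leq k$ except $j = \frac{k}{2}$ by leapfrogging the forbidden index --- passing from $(\frac{k}{2}-1)a$ to $(\frac{k}{2}+1)a$ with a step of $2a$ --- and then sets $j = k$ to force $w_{2,\gamma}(a) = 0$; since every $r$ with $\fracpart{r} \neq \frac12$ has the form $ja$ for such $a$ and $j$, this finishes uniformly over all denominators. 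You instead circumvent the obstruction at $\frac12$ by an overdetermined linear system at denominator $8$: I checked that your four triples $(\frac14, \frac34, 0)$, $(\frac18, \frac18, \frac34)$, $(\frac58, \frac58, \frac34)$, $(\frac14, \frac18, \frac58)$ each sum to $0$ in $\rats/\ints$ and avoid fractional part $\frac12$ (repeated entries are permitted by the statement of Corollary \ref{Ctransition}, and the paper itself implicitly uses $b = a$ in its additive chain), and the relations $A + D = 0$, $2B + D = 0$, $2C + D = 0$, $A + B + C = 0$ do force $2A = 0$; your doubling step $w_{2,\gamma}(2r) = 2\,w_{2,\gamma}(r)$ for $r = a/2^n$, $a$ odd, $n \geq 3$, is likewise legitimate since $\fracpart{r}$ and $\fracpart{2r}$ have denominators at least $8$ and $4$ respectively. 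The trade-off: your route costs an extra preliminary reduction --- the passage from general $r$ to its $2$-primary part invokes \cite[Lemme III.2.6]{Co:pv} a second time, together with the fact that $I_{\rats_2}$ acts trivially on prime-to-$2$ roots of unity, neither of which the paper's proof of Corollary \ref{Czero} needs (Lemme III.2.6 enters only inside the proof of Corollary \ref{Ctransition}) --- but it buys a sharper structural picture, isolating denominator $4$ as the unique delicate case and exhibiting exactly how the vanishing there is forced only by the combination of relations at level $8$, whereas the paper's leapfrog is shorter and treats all denominators at once.
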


\begin{proof}
If $\fracpart{r} \in \{0, \frac{1}{2}\}$, then $r = \gamma r$, thus $w_{2, \gamma}(r) = 0$ by definition. 
We also have $w_{2, \gamma}(-r) = -w_{2, \gamma}(r)$ for all $r \in \rats/\ints$ (this follows from plugging 
$(\rho, \sigma, \tau) = (r, -r, 0)$ into Corollary \ref{Ctransition}, unless $\fracpart{r} = \frac{1}{2}$, in which case it is obvious).
Plugging any triple $(a, b, -(a+b))$ 
into Corollary \ref{Ctransition} then shows that $$w_{2, \gamma}(a) + w_{2, \gamma}(b) = w_{2, \gamma}(a+b),$$ as long as
none of $\fracpart{a}$, $\fracpart{b}$, or $\fracpart{a+b}$ is $\frac{1}{2}$.

We now claim that, if $k > 4$ is even, and if $a \in \rats/\ints$ satisfies $\fracpart{a} = \frac{1}{k}$, 
then $w_{2, \gamma}(ja) = jw_{2, \gamma}(a)$ for
$1 \leq j \leq \frac{k}{2} -1$ and for $\frac{k}{2} + 1 \leq j \leq k$.
Admitting the claim, we set $j = k$ to show that $w_{2, \gamma}(a) = 0$, which in turn shows that
$w_{2, \gamma}(ja) = 0$ for all $j$ above.  Since any $r \in ([0, 1) \cap \rats)\backslash \{\frac{1}{2}\}$ is the fractional
part of some such $ja$, the claim implies the corollary.

To prove the claim, we note by additivity of $w_{2, \gamma}$ that $w_{2, \gamma}(ja) = jw_{2, \gamma}(a)$ for $1 \leq j \leq \frac{k}{2} - 1$.  
By additivity again (using $(\frac{k}{2} - 1)a$ and $2a$, neither of which has fractional part $\frac{1}{2}$), 
we have $w_{2, \gamma}((\frac{k}{2} + 1)a) = (\frac{k}{2} + 1)w_{2, \gamma}(a)$.   
Then, additivity shows that $w_{2, \gamma}(ja) = jw_{2, \gamma}(a)$ for $\frac{k}{2} + 1 \leq j \leq k$.
\end{proof}

\begin{theorem}\label{Tmain}
We have $w_2(a) = 0$ for all $a \in \mc{CM}^{ab}$.
\end{theorem}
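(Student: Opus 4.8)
The plan is to deduce the theorem essentially formally from Corollary \ref{Czero}, which has already absorbed all of the geometric input. Since the $a_r$, $r \in \rats/\ints$, generate $\mc{CM}^{ab}$ as a $\rats$-vector space, it suffices to show $w_2(a_r) = 0$ for every $r$. For $\gamma \in I_{\rats_2}$ acting on $\rats/\ints$ through the cyclotomic character, a one-line check gives $a_{\gamma r}(g) = a_r(g\gamma)$; that is, $a_{\gamma r}$ is the right translate of $a_r$ by $\gamma$. Corollary \ref{Czero} asserts $w_{2,\gamma}(r) = w_2(a_r) - w_2(a_{\gamma r}) = 0$ for all $r$ and all $\gamma \in I_{\rats_2}$, so $w_2$ is invariant under right translation by every $\gamma \in I_{\rats_2}$, and therefore factors through the $I_{\rats_2}$-coinvariants of $\mc{CM}^{ab} \otimes \complex$.

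Next I would invoke the decomposition of $\mc{CM}^{ab}\otimes\complex$ into (odd) Dirichlet characters recalled in the introduction, extending $w_2$ $\complex$-linearly. A primitive Dirichlet character $\psi$ is an eigenvector for right translation, with $R_\gamma \psi = \psi(\gamma)\psi$, so $w_2(\psi) = \psi(\gamma)\,w_2(\psi)$ for every $\gamma \in I_{\rats_2}$. The image of $I_{\rats_2}$ under the cyclotomic character is exactly $\ints_2^\times \subseteq \hat\ints^\times$, and $\psi$ restricted to this factor is nontrivial precisely when $2$ divides the conductor of $\psi$. Hence, whenever $\psi$ is ramified at $2$, there exists $\gamma$ with $\psi(\gamma) \neq 1$, forcing $w_2(\psi) = 0$. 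This is the case that genuinely uses the new stable-reduction description, funneled in through Corollary \ref{Czero}.

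It then remains to treat the characters unramified at $2$, equivalently those of odd conductor; these are exactly the $\psi$ fixed by $I_{\rats_2}$, and they span the invariant (hence coinvariant) part through which $w_2$ factors. For such $\psi$ the relevant $q = (\rho, \sigma, \tau)$ has all components of odd order, so $V_2(q) = 0$ directly from its definition, and the associated CM abelian variety $J_q$ has good reduction at $2$, giving $v_2(\omega_q) = 0$; thus $w_2(b_q) = v_2(\omega_q) - V_2(q) = 0$ and $w_2(\psi) = 0$. Equivalently, on the odd-order torsion subgroup the relation $w_2(a_\rho)+w_2(a_\sigma)+w_2(a_\tau)=w_2(b_q)=0$ makes $r \mapsto w_2(a_r)$ additive, and a homomorphism from a torsion group into $\rats$ must vanish. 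This good-reduction step is precisely the part already available to Colmez, lying outside the $\log 2$ ambiguity, so no new geometry is needed here (cf.\ \cite[III]{Co:pv}). Combining the two cases yields $w_2 \equiv 0$.

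I expect the main obstacle to be bookkeeping rather than mathematics, now that Corollary \ref{Czero} is in place. The two points needing care are (i) matching the cyclotomic action on $\rats/\ints$ with right translation on characters, so that ramification at $2$ translates into a nonvanishing eigenvalue $\psi(\gamma)$, and (ii) fixing the normalization of $v_2(\omega_q)$ for $J_q$ of good reduction so that the odd-conductor vanishing is clean and exhaustive on the $I_{\rats_2}$-invariant part. The ramified case should be immediate from the eigenvalue argument; the only genuine subtlety is invoking the correct good-reduction normalization of $v_2(\omega_q)$. This entire passage mirrors Colmez's argument \cite[Corollaire III.2.7]{Co:pv} for odd $p$, with our Corollary \ref{Czero} substituting for his Coleman--McCallum input.
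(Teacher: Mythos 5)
Your proposal is correct and takes essentially the same route as the paper: the paper's proof of Theorem \ref{Tmain} is a one-line citation, asserting that the theorem follows from Corollary \ref{Czero} exactly as \cite[Corollaire III.2.7]{Co:pv} follows from \cite[Lemme III.2.6(i)]{Co:pv}, and your two-case deduction (right-translation invariance from Corollary \ref{Czero} kills the characters ramified at $2$, while the odd-conductor part reduces via the distribution of $w_2$ over $b_q$ to the good-reduction normalization already available to Colmez) is precisely that deduction written out. The points you flag for care---matching the cyclotomic action with right translation, and the normalization $v_2(\omega_q)=0$ in the good-reduction case---are exactly the bookkeeping absorbed by Colmez's framework, so there is no gap.
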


\begin{proof}
This follows from Corollary \ref{Czero} exactly as \cite[Corollaire III.2.7]{Co:pv} follows from \cite[Lemme III.2.6(i)]{Co:pv}.
\end{proof}

Theorem \ref{Tmain} completes the proof of Colmez's product formula when the field of complex multiplication is an abelian extension of $\rats$.

\begin{remark}\label{Ralready}
Colmez already proved Corollary \ref{Czero} when $r \in \frac{1}{8}\ints_{(2)}/\ints$ (\cite[Lemma III.2.8]{Co:pv}).  This was used to give a geometric proof of the 
Chowla-Selberg formula (\cite[III.3]{Co:pv}).
\end{remark}

\section{Computations}\label{Scomputations}
The results of this section are used only in the proof of Proposition \ref{Pdiffformratio}.

\subsection{Base $2$ expansions}\label{Sbase2}
Let $S(\ell)$ be the sum of the digits in the base $2$ expansion of $\ell$, or $\infty$ if $\ell \in \rats \backslash \{0, 1, 2, \ldots\}$.
It is clear that $S(\ell) = 1$ iff $\ell$ is an integer and a power of $2$.  Note also that if $\ell_1$ and $\ell_2$ are positive integers 
whose ratio is a power of $2$, then $S(\ell_1) = S(\ell_2)$.

\begin{lemma}\label{Lbase2}
If $\ell_1$ and $\ell_2$ are nonnegative integers, then $S(\ell_1 + \ell_2) \leq S(\ell_1) + S(\ell_2)$.  Equality never holds if $\ell_1 = \ell_2$.  Furthermore, if $\ell$ is a 
positive integer, there are
exactly $2^{S(\ell)} - 2$ ordered pairs of positive integers $(\ell_1, \ell_2)$ such that $\ell_1 + \ell_2 = \ell$ and $S(\ell_1) + S(\ell_2) = S(\ell)$.
\end{lemma}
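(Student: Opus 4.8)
The plan is to reduce all three assertions to the single classical identity
\[
S(\ell_1) + S(\ell_2) = S(\ell_1 + \ell_2) + c(\ell_1, \ell_2),
\]
where $c(\ell_1,\ell_2)$ denotes the number of carries that occur when $\ell_1$ and $\ell_2$ are added in base $2$. I would establish this either by a direct induction on $\ell_1 + \ell_2$, splitting off the lowest binary digit of each summand and treating the carry and no-carry cases separately, or more slickly from Legendre's formula $v_2(m!) = m - S(m)$: applying it to the positive integer $\binom{\ell_1+\ell_2}{\ell_1}$ gives $v_2\binom{\ell_1+\ell_2}{\ell_1} = S(\ell_1) + S(\ell_2) - S(\ell_1+\ell_2)$, and by Kummer's theorem this $2$-adic valuation equals $c(\ell_1,\ell_2)$. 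Either way, $c(\ell_1,\ell_2) \geq 0$ is manifest.

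The subadditivity $S(\ell_1+\ell_2) \le S(\ell_1) + S(\ell_2)$ is then immediate from $c(\ell_1,\ell_2) \ge 0$. For the equality clause, take $\ell_1 = \ell_2 = m$. Since doubling merely shifts the binary expansion, $S(2m) = S(m)$, so the displayed identity forces $c(m,m) = 2S(m) - S(m) = S(m)$. For $m \ge 1$ we have $S(m) \ge 1$, hence $c(m,m) > 0$ and the inequality is strict.

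For the counting statement, observe that the equality $S(\ell_1) + S(\ell_2) = S(\ell)$ with $\ell_1 + \ell_2 = \ell$ holds exactly when $c(\ell_1,\ell_2) = 0$, i.e.\ when no carries occur, which happens iff the binary supports of $\ell_1$ and $\ell_2$ are disjoint. In that case $\ell_1 + \ell_2 = \ell$ forces these two supports to partition the support of $\ell$. Thus carry-free representations $\ell = \ell_1 + \ell_2$ by nonnegative integers correspond bijectively to assignments of each of the $S(\ell)$ one-bits of $\ell$ to either $\ell_1$ or $\ell_2$, of which there are $2^{S(\ell)}$. Discarding the two assignments that send every bit to a single summand, which produce the pairs $(\ell, 0)$ and $(0, \ell)$, leaves exactly $2^{S(\ell)} - 2$ ordered pairs of positive integers, as claimed.

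The only real content here is the carry identity; everything else is bookkeeping. The one point demanding care is the positivity requirement in the third part, where one must remember to exclude precisely the two monochromatic assignments, together with the harmless degenerate case $\ell_1 = \ell_2 = 0$ in the equality clause, where $S(0) = 0$ makes equality hold and so the intended reading is that at least one summand is positive.
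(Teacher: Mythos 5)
Your proposal is correct and takes essentially the same route as the paper: the paper also characterizes equality in $S(\ell_1+\ell_2)\le S(\ell_1)+S(\ell_2)$ by the absence of carries in base-$2$ addition and counts the carry-free ordered decompositions as choices of a nonempty proper subset of the one-bits of $\ell$, yielding $2^{S(\ell)}-2$. Your carry-counting identity (with the Legendre/Kummer aside) simply makes explicit what the paper dismisses as ``clear from the standard addition algorithm,'' and your remark about the degenerate pair $\ell_1=\ell_2=0$, where equality does hold since $S(0)=0$, is a fair catch about the literal wording of the statement.
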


\begin{proof}
The first two assertions are clear from the standard addition algorithm.  Now, for positive integers $\ell_1$ and $\ell_2$, we have $S(\ell_1 + \ell_2) = S(\ell_1) + S(\ell_2)$
exactly when no carrying takes place in the addition of $\ell_1$ and $\ell_2$ in base $2$.  This happens when $\ell_1$ is formed by taking a nonempty, proper subset
of the $1$'s in the base $2$ expansion of $\ell$, and converting them to zeros.  There are $2^{S(\ell)-2}$ such subsets, proving the lemma. 
\end{proof}

The following lemma gathers several elementary facts.  The somewhat strange phrasings will pay off in \S\ref{Sfermat}.  Notice that all inequalities are phrased in terms
of something being less than or equal to $\frac{1}{2}S(\ell)$.
\begin{lemma}\label{Lpossibilities}
Let $\ell$ be a positive integer.  
\begin{enumerate}[(i)]
\item $2S(\frac{\ell}{4}) - 2 \leq \frac{1}{2}S(\ell)$ iff $\ell \geq 4$ is a power of $2$.
\item $S(\frac{\ell}{2}) - 1 \leq \frac{1}{2}S(\ell)$ iff $S(\ell) \leq 2$ and $\ell$ is even.
\item There are exactly $2^{S(\ell)} - 2$ ordered pairs of positive integers $(\ell_1, \ell_2)$ such that $\ell_1 + \ell_2 = \ell$ and
$\frac{1}{2}S(\ell_1) + \frac{1}{2}S(\ell_2) \leq \frac{1}{2}S(\ell)$.
\item If $\ell_1$ and $\ell_2$ are distinct positive integers such that $2(\ell_1 + \ell_2) = \ell$, then
$S(\ell_1) + S(\ell_2) - 1 \leq \frac{1}{2}S(\ell)$ iff $S(\ell_1) = S(\ell_2) = 1$ and $S(\ell) = 2$.
\item If $\ell_1$, $\ell_2$, and $\ell_3$ are positive integers, not all distinct, such that $\ell_1 + \ell_2 + \ell_3 = \ell$, then it is never the case that
$\frac{1}{2}S(\ell_1) + \frac{1}{2}S(\ell_2) + \frac{1}{2}S(\ell_3) \leq \frac{1}{2}S(\ell)$.
\item If $\ell_1$ and $\ell_2$ are distinct positive integers such that $\ell_1 + 3\ell_2 = \ell$, then it is never the case that
$\frac{1}{2}S(\ell_1) + \frac{3}{2}S(\ell_2) \leq \frac{1}{2}S(\ell)$.
\item If $\ell_1$, $\ell_2$, and $\ell_3$ are distinct positive integers such that $\ell_1 + \ell_2 + 2\ell_3 = \ell$, then it is never the case that
$\frac{1}{2}S(\ell_1) + \frac{1}{2}S(\ell_2) + S(\ell_3) \leq \frac{1}{2}S(\ell)$.
\item If $\ell_1$, $\ell_2$, $\ell_3$, and $\ell_4$ are distinct nonnegative integers such that $\ell_1 + \ell_2 + \ell_3 + \ell_4 = \ell$, then it is never the case that
$\frac{1}{2}S(\ell_1) + \frac{1}{2}S(\ell_2) + \frac{1}{2}S(\ell_3) + \frac{1}{2}S(\ell_4) + 1 \leq \frac{1}{2}S(\ell)$.
\end{enumerate}
\end{lemma}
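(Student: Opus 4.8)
The plan is to reduce every part to an inequality among the digit-sums $S(\ell_1), S(\ell_2), \dots$ and $S(\ell)$, using two features of $S$ recorded just before the lemma: first, $S$ is unchanged by multiplication by a power of $2$, so $S(c\ell') = S(\ell')$ whenever $c$ is a power of $2$ and $\ell'$ is a nonnegative integer (while $S(\ell') = \infty$ precisely when $\ell'$ is \emph{not} a nonnegative integer); and second, $S(\ell') = 1$ exactly when $\ell'$ is a power of $2$. The workhorse is Lemma \ref{Lbase2}: subadditivity $S(m_1 + m_2) \le S(m_1) + S(m_2)$, its strict form when $m_1 = m_2$, and the exact count of $2^{S(\ell)} - 2$ no-carry decompositions. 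In each inequality I would first clear the coefficient $\frac{1}{2}$ (harmless, as it multiplies every term), reducing to a statement about integer digit-sums.

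For the three ``iff'' statements (i), (ii), (iv): in (i), note $S(\tfrac{\ell}{4}) = \infty$ unless $4 \mid \ell$, in which case $S(\tfrac{\ell}{4}) = S(\ell)$, so the inequality becomes $2S(\ell) - 2 \le \tfrac{1}{2}S(\ell)$, i.e. $\tfrac{3}{2}S(\ell) \le 2$, forcing $S(\ell) = 1$; this is exactly $\ell$ a power of $2$ with $4 \mid \ell$. Part (ii) is identical with $\tfrac{\ell}{2}$ replacing $\tfrac{\ell}{4}$, reducing to $\tfrac{1}{2}S(\ell) \le 1$. For (iv), since $\ell = 2(\ell_1+\ell_2)$ we have $S(\ell) = S(\ell_1+\ell_2)$; writing $m = S(\ell_1)+S(\ell_2) \ge 2$ and $s = S(\ell) \le m$, the inequality $m - 1 \le \tfrac{s}{2}$ together with $s \le m$ forces $m = 2$ and then $s = 2$, which is precisely $S(\ell_1) = S(\ell_2) = 1$ and $S(\ell) = 2$; the converse is immediate, as two distinct powers of $2$ sum to a number of digit-sum $2$.

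For (iii), clearing $\tfrac{1}{2}$ turns the condition into $S(\ell_1) + S(\ell_2) \le S(\ell)$; since subadditivity already gives $S(\ell) \le S(\ell_1) + S(\ell_2)$, this is equivalent to the equality $S(\ell_1) + S(\ell_2) = S(\ell)$, and Lemma \ref{Lbase2} counts exactly $2^{S(\ell)} - 2$ such ordered pairs. The four ``never'' statements (v)--(viii) are strict-subadditivity arguments. In (v), two of the $\ell_i$ coincide, so the strict clause of Lemma \ref{Lbase2} yields $S(\ell) < S(\ell_1) + S(\ell_2) + S(\ell_3)$, contradicting the claimed $\le$. For (vi) I would use $S(3\ell_2) \le S(\ell_2) + S(2\ell_2) = 2S(\ell_2) < 3S(\ell_2)$, hence $S(\ell) \le S(\ell_1) + S(3\ell_2) < S(\ell_1) + 3S(\ell_2)$. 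Part (vii) is the same with $S(2\ell_3) = S(\ell_3) < 2S(\ell_3)$, and (viii) is plain subadditivity, which already gives $S(\ell) \le \sum_i S(\ell_i) < \sum_i S(\ell_i) + 2$.

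The individual steps are short; the only labor is bookkeeping. The genuine obstacle is conceptual rather than technical: recognizing that each apparently distinct inequality is governed by exactly how much the digit-sum drops when one combines summands forced to share bits (equal summands, or a summand together with its double). Once the factor $\tfrac{1}{2}$ is cleared and the power-of-$2$ invariance of $S$ is applied, (iii) becomes the equality case of subadditivity furnished by Lemma \ref{Lbase2}, while (v)--(vii) become the corresponding strict cases; the ``iff'' parts (i), (ii), (iv) additionally require tracking when the argument of $S$ fails to be a nonnegative integer, so that $S = \infty$ and the inequality fails trivially. I expect no serious difficulty beyond handling these boundary cases uniformly, notably the $\infty$ values and the bound $S(\ell_i) \ge 1$ for positive integers.
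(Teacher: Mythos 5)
Your proposal is correct and follows essentially the same route as the paper: the paper's own (very terse) proof likewise reduces (i) and (ii) to the fact that $S(\ell/4)$ and $S(\ell/2)$ equal $S(\ell)$ or $\infty$, derives (iii) from the counting statement in Lemma \ref{Lbase2}, and obtains (iv)--(viii) from subadditivity and its strict form for equal summands, exactly as you do. Your write-up simply makes explicit the bookkeeping (clearing the factor $\frac{1}{2}$, the groupings such as $S(3\ell_2) \leq 2S(\ell_2)$, and the $\infty$ boundary cases) that the paper leaves to the reader.
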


\begin{proof}
Parts (i) and (ii) are easy, using that $S(\ell/4)$ and $S(\ell/2)$ are either equal to $S(\ell)$ or $\infty$.  Part (iii) follows from Lemma \ref{Lbase2}.
Part (iv) follows from that fact that $S(\ell) = S(\ell_1 + \ell_2) \leq S(\ell_1) + S(\ell_2)$.  Parts (v), (vi), (vii), and (viii) also follow from 
Lemma \ref{Lbase2}.
\end{proof}

\subsection{Power series}\label{Spower}
As in \S\ref{Sfermat}, 
let $f: Y \to X = \proj^1$ be the branched cover of smooth curves given birationally by $y^{2^n}=x^a(x-1)^b$ where $a$ is odd, $1 \leq v_2(b) \leq n-2$, and 
$0 < a, b < 2^n$. 
Throughout this section, we take $K/\rats_2$ to be a finite extension over which $f$ admits a stable model, and $R$ to be the
ring of integers of $K$.  We will take further finite extensions of $K$ and $R$ as necessary.  The valuation $v$ on $K$ (and any finite extension) is always normalized 
so that $v(2) = 1$.  Throughout this section, we fix a square root $i$ of $-1$ in $K$.
We let $f^{st}: Y^{st} \to X^{st}$ be the stable model of $f$, and $\ol{f}: \ol{Y} \to \ol{X}$ its stable reduction (\S\ref{Smonodromy}).  

Set $d = \frac{a}{a+b} + \frac{\sqrt{2^nbi}}{(a+b)^2}$, and $s:= n - v_2(b) \geq 2$.  
Let $\ol{d}$ be the specialization of $d$ in $\ol{X}$.  
Let $e$ be any element of $R$ with valuation $n- \frac{s}{2} + \frac{1}{2}$.  If $x = d+et$, then $t$ is a parameter
of $\hat{\mc{O}}_{X^{st}, \ol{d}}$ (Corollary \ref{Cparameters}).  
We set $$g(x) = x^a(x-1)^bd^{-a}(d-1)^{-b}.$$  Note that $g(d) = 1$.  
\begin{lemma}\label{Lnormalize}
Expanding out $g(x)$ in terms of $t$ yields an expression of the form
$$\gamma(t) := g(d+et) = \sum_{\ell=0}^{\infty} c_{\ell} t^\ell$$ where $c_0 = 1$, $v(c_2)=n$, $\frac{c_1^2}{c_2} \equiv 2^{n+1}i\pmod{2^{n+2}}$, and $v(c_{\ell}) > 
n + \frac{1}{2}S(\ell)$ for all $\ell \geq 3$.  In particular, $v(c_1) = n + \frac{1}{2}$.
\end{lemma}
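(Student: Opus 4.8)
The plan is to pass to the logarithmic derivative of $g$ and then exponentiate. Since $g(d)=1$, write $\gamma(t)=\exp(H(t))$ with $H(t):=h(d+et)$, where $h=\log g$ satisfies $h'(x)=\frac{a}{x}+\frac{b}{x-1}$. Expanding $H(t)=\sum_{m\geq 1}H_m t^m$ with $H_m=\frac{h^{(m)}(d)}{m!}e^m$, the first task is to compute $v(H_m)$, and the crucial input is Remark \ref{Rdval}, which gives $v(d)=0$ and $v(d-1)=n-s$. At the first derivative there is a cancellation: $h'(d)=\frac{(a+b)d-a}{d(d-1)}=\frac{\sqrt{2^nbi}}{(a+b)\,d(d-1)}$, so $v(h'(d))=(n-\tfrac{s}{2})-(n-s)=\tfrac{s}{2}$ and hence $v(H_1)=v(e)+\tfrac{s}{2}=n+\tfrac12$. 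For $m\geq 2$ no cancellation occurs, since the term $\frac{b}{(d-1)^m}$ dominates $h^{(m)}(d)$; a short computation then yields $v(H_m)=n-s+\tfrac{m(s+1)}{2}-v(m)$. In particular $v(H_2)=n$, and for $m\geq 3$ one checks the strict bound $v(H_m)>n+\tfrac12 S(m)$, which reduces to $\tfrac{m(s+1)}{2}>s+v(m)+\tfrac12 S(m)$ and holds for all $m\geq 3$ and $s\geq 2$.

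With the $H_m$ in hand, expand $\gamma=\exp(H)$ as $c_\ell=\sum_{\sum_m m a_m=\ell}\prod_m \frac{H_m^{a_m}}{a_m!}$. Reading off the lowest terms gives $c_0=1$, $c_1=H_1$ (so $v(c_1)=n+\tfrac12$), and $c_2=H_2+\tfrac12 H_1^2$; since $v(\tfrac12 H_1^2)=2n>n=v(H_2)$, we get $v(c_2)=n$. For the ratio I would use $\frac{c_1^2}{c_2}=\frac{H_1^2}{H_2+\frac12 H_1^2}=\frac{2h'(d)^2}{h''(d)+h'(d)^2}$, expand the denominator around its dominant term $-\frac{b}{(d-1)^2}$, and substitute $h'(d)^2=\frac{2^nbi}{(a+b)^2d^2(d-1)^2}$ to obtain $\frac{c_1^2}{c_2}\equiv \frac{-2^{n+1}i}{(a+b)^2d^2}\pmod{2^{n+2}}$, the discarded corrections having valuation at least $2n+1-s\geq n+2$ (this is where $s\leq n-1$ enters). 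Finally $(a+b)d\equiv a\pmod 2$ with $a$ odd forces $(a+b)^2d^2\equiv 1\pmod 2$, whence $\frac{-2^{n+1}i}{(a+b)^2d^2}\equiv 2^{n+1}i\pmod{2^{n+2}}$, as claimed.

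For the remaining bound $v(c_\ell)>n+\tfrac12 S(\ell)$ with $\ell\geq 3$, I would estimate each monomial $\prod_m \frac{H_m^{a_m}}{a_m!}$ separately; since the valuation of a sum is at least the minimum of the valuations of its summands, it suffices to show every monomial exceeds $n+\tfrac12 S(\ell)$. By Legendre's formula $v(a_m!)=a_m-S(a_m)$, a monomial has valuation $\sum_m a_m v(H_m)-\sum_m(a_m-S(a_m))$. I would split the parts into those equal to $1$, those equal to $2$, and those $\geq 3$ (using the exact values $v(H_1)=n+\tfrac12$, $v(H_2)=n$ and the strict bound above for the large parts), and then invoke the subadditivity $S(\ell)\leq \sum_m S(\text{parts})$ from Lemma \ref{Lbase2}. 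This collapses the estimate to an elementary inequality of the shape $j\,(n-\tfrac12)+P\,(n-1)\geq n$, where $j$ is the number of parts equal to $1$ and $P$ the number of parts of size at least $2$, plus nonnegative digit-sum contributions; this holds for every admissible partition of $\ell\geq 3$ once $n\geq 3$.

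The main obstacle is the precision bookkeeping in the congruence for $c_1^2/c_2$: one must verify that every discarded correction genuinely has valuation $\geq n+2$ and that $(a+b)^2d^2\equiv 1\pmod 2$ with exactly the right $2$-adic accuracy. The combinatorial bound for $\ell\geq 3$, while lengthy, is routine; its only subtlety is that parts of size $2$ are ``cheap'' (their contribution $v(H_2)=n$ meets the target with no slack), so one must confirm that collections consisting only of $1$'s and $2$'s still clear the threshold. This they do, because whenever $\ell\geq 3$ there must be either at least two parts of size $\geq 2$ or a part contributing weight $n-\tfrac12$, and the nonnegative digit-sum terms supply the strictness in the borderline small cases.
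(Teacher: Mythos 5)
Your proof is correct, but it takes a genuinely different route from the paper's. The paper disposes of this lemma almost entirely by citation: the claims about $c_0$, $c_1$, $c_2$ are read off from the claim at the start of the $p=2$ case of \cite[Lemma C.2]{Ob:fm}, and the continuation of that proof supplies the \emph{exact} valuation $v(c_\ell) = n + 1 + \frac{\ell-2}{2}(s+1) - v(\ell)$, after which only the elementary inequality $\ell > 1 + v(\ell) + \tfrac{1}{2}S(\ell)$ for $\ell \geq 3$ is needed. You instead give a self-contained argument via $\gamma = \exp(H)$ with $H(t) = a\log(1 + et/d) + b\log(et/(d-1) + 1)$, which is legitimate since $\exp$ and $\log$ are mutually inverse on formal power series with zero constant term, so no $2$-adic convergence issue arises. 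Your closed forms check out: $h'(d) = \sqrt{2^nbi}/\bigl((a+b)d(d-1)\bigr)$ gives $v(H_1) = n + \tfrac{1}{2}$ (this cancellation is exactly what $d$ is designed for), and $H_m = \frac{(-1)^{m-1}}{m}\bigl(a(e/d)^m + b(e/(d-1))^m\bigr)$ gives $v(H_m) = n - s + \frac{m(s+1)}{2} - v(m)$ for $m \geq 2$, since $b/(d-1)^m$ strictly dominates when $s \leq n-1$; reassuringly, this coincides with the exact formula the paper imports, reflecting that $H_\ell$ is the (generically) dominant monomial in $c_\ell$. Your precision bookkeeping for $c_1^2/c_2 = 2h'(d)^2/\bigl(h''(d) + h'(d)^2\bigr)$ is also sound: replacing the denominator by $-b/(d-1)^2$ incurs relative error of valuation $\geq n-s$, hence absolute error $\geq 2n+1-s \geq n+2$, and since $\bigl((a+b)d\bigr)^2 \equiv a^2 \equiv 1 \pmod{8}$ and $-2^{n+1}i \equiv 2^{n+1}i \pmod{2^{n+2}}$, the sign and the factor $(a+b)^2d^2$ are harmless at this precision. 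The one step you only sketch---the partition estimate for $\ell \geq 3$---does close: using $v(a_m!) = a_m - S(a_m)$ and charging each part its subadditivity share $\tfrac{1}{2}S(\text{part})$ from Lemma \ref{Lbase2}, a part of size $1$ nets $n-1$, a part of size $2$ nets $n - \tfrac{3}{2}$, and a part of size $\geq 3$ nets strictly more than $n-1$, so any monomial with at least two parts strictly clears $n + \tfrac{1}{2}S(\ell)$ once $n \geq 3$ (which is forced by $1 \leq v_2(b) \leq n-2$), while single-part monomials are handled by the strict bound on $v(H_\ell)$. In sum, the paper's proof is short but opaque (the content lives in the appendix of \cite{Ob:fm}); yours is longer but self-contained and explains structurally where the $\tfrac{1}{2}S(\ell)$ threshold comes from, at the cost of obtaining only an inequality for $v(c_\ell)$, $\ell \geq 3$, rather than its exact value---which is all the lemma asserts anyway.
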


\begin{remark}\label{Rfinite}
Of course, the ``series" above is actually just a polynomial.
\end{remark}

\begin{proof}
The claim at the beginning of the proof of the $p=2$ part of \cite[Lemma C.2]{Ob:fm} proves everything except the statement for $\ell \geq 3$.
The continuation of the proof of \emph{loc. cit.} leads to $$v(c_\ell) = n + 1 + \frac{\ell-2}{2}(s+1) - v(\ell) > n + \ell - 1 - v(\ell)$$ (recall, $s \geq 2$).    
It is easy to see that $\ell > 1 + v(\ell) + \frac{1}{2}S(\ell)$ for $\ell \geq 3$, from which the lemma follows.
\end{proof}

We wish to understand the $2^n$th root of $\gamma(t)$.  It turns out that it is easier to do this by first taking a $2^{n-2}$th root, and then a $4$th root.

\begin{lemma}\label{L4thpower}
After possibly replacing $R$ by a finite extension, 
the power series $\gamma(t) = \sum_{\ell=0}^{\infty} c_\ell t^\ell$ from Lemma \ref{Lnormalize} has a $2^{n-2}$nd root in $R[[t]]$ of the form
$$\delta(t) = \sum_{\ell=0}^{\infty} d_\ell t^\ell,$$
where $d_0 = 1$, $v(d_2) = 2$, $\frac{d_1^2}{d_2} \equiv 8i \pmod{16}$, and $v(d_\ell) > 2 + \frac{1}{2}S(\ell)$ for $\ell \geq 3$.  In particular, $v(d_1) = \frac{5}{2}$.
\end{lemma}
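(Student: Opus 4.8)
The plan is to extract the root coefficient-by-coefficient and to control valuations by strong induction, feeding the tight cases into the combinatorial estimates of \S\ref{Sbase2}. Set $N = 2^{n-2}$. Since $\gamma(0) = c_0 = 1$, there is a unique power series $\delta(t) = 1 + \sum_{\ell \geq 1} d_\ell t^\ell$ with $\delta(t)^N = \gamma(t)$, whose coefficients are determined recursively by expanding $\delta^N$ and comparing coefficients of $t^\ell$; after a finite extension of $R$ (to accommodate the half-integral valuations that appear) all $d_\ell$ will lie in $R$, as the valuation bounds below confirm. The recursion reads
$$N d_\ell = c_\ell - \sum_{j=2}^{N} \binom{N}{j} \sum_{\substack{\ell_1 + \cdots + \ell_j = \ell \\ \ell_i \geq 1}} d_{\ell_1} \cdots d_{\ell_j},$$
and since each part $\ell_i$ occurring in a product with $j \geq 2$ satisfies $\ell_i < \ell$, this is a genuine recursion in $\ell$.

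First I would dispose of the low-order coefficients by direct computation. Comparing coefficients of $t^1$ gives $d_1 = c_1/N$, so $v(d_1) = v(c_1) - (n-2) = \tfrac{5}{2}$ by Lemma \ref{Lnormalize}. Comparing coefficients of $t^2$ gives $d_2 = \tfrac{c_2}{N} - \tfrac{N-1}{2} d_1^2$; here $v(c_2/N) = n - (n-2) = 2$ while $v\bigl(\tfrac{N-1}{2}d_1^2\bigr) = -1 + 2 \cdot \tfrac{5}{2} = 4$, so $v(d_2) = 2$. For the congruence I would factor $\tfrac{d_1^2}{d_2} = \tfrac{c_1^2}{N c_2(1 - \epsilon)}$ with $\epsilon = \tfrac{N(N-1)}{2} d_1^2 / c_2$ of valuation $2$; the leading factor is $\tfrac{c_1^2}{N c_2} \equiv 8i \pmod{16}$ by the congruence $\tfrac{c_1^2}{c_2} \equiv 2^{n+1} i \pmod{2^{n+2}}$ of Lemma \ref{Lnormalize}, and since $v(\epsilon) = 2$ we have $1/(1-\epsilon) \equiv 1 \pmod{4}$, which does not disturb the leading term, yielding $\tfrac{d_1^2}{d_2} \equiv 8i \pmod{16}$.

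The heart of the argument is the bound $v(d_\ell) > 2 + \tfrac{1}{2}S(\ell)$ for $\ell \geq 3$, which I would prove by strong induction on $\ell$, taking $v(d_1) = \tfrac{5}{2}$ and $v(d_2) = 2$ as the base data. It suffices to show that every term on the right-hand side of the recursion has valuation $> n + \tfrac{1}{2}S(\ell)$, i.e.\ that $v(N d_\ell) > n + \tfrac{1}{2}S(\ell)$. The term $c_\ell$ is controlled directly by Lemma \ref{Lnormalize}. For a product term I would invoke the standard evaluation $v\bigl(\binom{N}{j}\bigr) = (n-2) - v_2(j)$ for $1 \leq j \leq N$, which reduces the required inequality to
$$\sum_{i=1}^{j} v(d_{\ell_i}) > 2 + v_2(j) + \tfrac{1}{2}S(\ell).$$
Substituting $v(d_1) = \tfrac{5}{2}$, $v(d_2) = 2$, and the inductive bound $v(d_{\ell_i}) > 2 + \tfrac{1}{2}S(\ell_i)$ for $\ell_i \geq 3$, this becomes a purely combinatorial assertion about base-$2$ digit sums across the various shapes of the partition $\ell = \ell_1 + \cdots + \ell_j$, which is exactly what the eight parts of Lemma \ref{Lpossibilities} are designed to verify.

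I expect the main obstacle to be this final combinatorial step, and in particular the partitions in which many parts equal $2$: because $d_2$ carries the deficient valuation $2$ rather than $\tfrac{5}{2}$, these products are precisely the ones whose valuation comes closest to violating the bound. It is these tight configurations, together with the cases governed by carrying in base $2$ (controlled via Lemma \ref{Lbase2}), that force the careful phrasings of Lemma \ref{Lpossibilities}(i)--(viii). Once that case analysis is carried out, the inductive step closes and the lemma follows.
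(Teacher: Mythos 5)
Your low-order computations are fine: $d_1 = c_1/2^{n-2}$, $d_2 = c_2/2^{n-2} - \frac{2^{n-2}-1}{2}d_1^2$, and the congruence $\frac{d_1^2}{d_2} \equiv 8i \pmod{16}$ all follow from Lemma \ref{Lnormalize} exactly as you describe. But there is a genuine gap at what you yourself identify as the heart of the argument: the inequality $\sum_{i=1}^{j} v(d_{\ell_i}) > 2 + v_2(j) + \frac{1}{2}S(\ell)$ is never proved, and Lemma \ref{Lpossibilities}, to which you defer it, cannot prove it. That lemma is tailored to the \emph{quartic} recursion in the proof of Proposition \ref{Pasymptoticvaluation}: its eight parts treat only the decomposition shapes arising from a fourth power (at most four parts, with the specific patterns $\ell/2$, $\ell/4$, $\ell_1+\ell_2$, $2\ell_1+2\ell_2$, etc.), and none of its statements involves the quantity $v_2(j)$ coming from $v\bigl(\binom{2^{n-2}}{j}\bigr) = (n-2) - v_2(j)$. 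In your recursion $j$ runs up to $2^{n-2}$, so you need an estimate uniform in $j$ over all compositions $\ell = \ell_1 + \cdots + \ell_j$, which is simply not in \S\ref{Sbase2}. The estimate is in fact true and elementary: writing $m$ for the number of parts equal to $2$ (the ``deficient'' coefficient, since $v(d_2) = 2$ while $v(d_1) = \frac{5}{2} = 2 + \frac{1}{2}S(1)$), your base data and inductive hypothesis give $\sum_i v(d_{\ell_i}) \geq 2j - \frac{m}{2} + \frac{1}{2}\sum_i S(\ell_i) \geq 2j - \frac{m}{2} + \frac{1}{2}S(\ell)$ by Lemma \ref{Lbase2}, and one checks $2j - \frac{m}{2} > 2 + v_2(j)$ for all $j \geq 2$ \emph{except} the boundary configuration $j = m = 2$, i.e.\ $(\ell_1,\ell_2) = (2,2)$, $\ell = 4$, where equality holds and strictness must instead come from the clause of Lemma \ref{Lbase2} that subadditivity of $S$ is strict when $\ell_1 = \ell_2$. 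So your route does close, but only after this case analysis, which you must supply yourself rather than cite.

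It is worth noting that the paper avoids the induction entirely, which is why no analogue of Lemma \ref{Lpossibilities} is needed here. It sets $w = \gamma(t) - 1$ and expands $\delta(t) = (1+w)^{1/2^{n-2}}$ as a binomial series, computing $v\bigl(\binom{1/2^{n-2}}{j}\bigr) = S(j) + j - jn$ and combining this with $v(c_\ell) \geq n + \frac{1}{2}S(\ell) - \frac{1}{2}$ (equality only at $\ell = 2$) to show that for $j \geq 2$ the coefficient of $t^\ell$ in $\binom{1/2^{n-2}}{j}w^j$ has valuation greater than $S(j) + \frac{j}{2} + \frac{1}{2}S(\ell) \geq 2 + \frac{1}{2}S(\ell)$; the one near-tight case (all parts equal to $2$) is again killed by the strictness clause of Lemma \ref{Lbase2}. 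All assertions of the lemma are then read off from $\delta \equiv 1 + w/2^{n-2}$ modulo negligible terms, directly from Lemma \ref{Lnormalize}. The genuinely recursive argument of the type you set up is reserved in the paper for the subsequent fourth-root extraction, where the exponent is small and fixed --- which is precisely the situation Lemma \ref{Lpossibilities} was designed for.
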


\begin{proof}
Let $w = \gamma(t) - 1$.  Binomially expanding $(1 + w)^{1/2^{n-2}}$ gives 
$$\delta(t) = 1 + \frac{w}{2^{n-2}} + \sum_{j=2}^{\infty} \binom{1/2^{n-2}}{j} w^j.$$
The valuation of $\binom{1/2^{n-2}}{j}$ is $$S(j) - j - j(n-2) = S(j) + j - jn.$$  
On the other hand, the valuation of $c_{\ell}$ (the coefficient of $t^{\ell}$ in $w$)  
is at least $n + \frac{1}{2}S(\ell) - \frac{1}{2}$ (Lemma \ref{Lnormalize}, the equality only holds for $\ell = 2$). 
So, by Lemma \ref{Lbase2}, the coefficient of $t^\ell$ in $w^j$ for $j \geq 2$ has valuation greater than 
$jn + \frac{1}{2} S(\ell) - \frac{j}{2}$ (equality could only occur if $\ell = 2j$, but in fact, does not, because 
$j(n + \frac{1}{2}S(2) - \frac{1}{2}) > jn + \frac{1}{2}S(2j) - \frac{j}{2}).$  
Combining everything, the coefficient of $t^{\ell}$ in $\binom{1/2^{n-2}}{j} w^j$ (for $j \geq 2$) has valuation greater than
$S(j) + \frac{j}{2} + \frac{1}{2}S(\ell)$, which is at least $2 + \frac{1}{2}S(\ell)$.

Thus, for the purposes of the lemma, we may replace $\delta(t)$ by $1 + \frac{w}{2^{n-2}}$.  The lemma then follows easily from Lemma \ref{Lnormalize}.
\end{proof}

\begin{proposition}\label{Pasymptoticvaluation}
After possibly replacing $R$ by a finite extension, 
the power series $\delta(t) = \sum_{\ell=0}^{\infty} d_\ell t^{\ell}$ from Lemma \ref{L4thpower} has a $4$th root in $R[[t]]$ of the form
$$\alpha(t) = \sum_{j=0}^{\infty} a_{\ell} t^{\ell},$$ where $a_0 = 1$, and $$a_\ell \equiv d_1^{\ell}(1 + i)^{S(\ell) -5\ell} \pmod{(1+i)^{S(\ell) + 1}}.$$ 
In particular, $v(a_\ell) = \frac{1}{2}S(\ell)$.
\end{proposition}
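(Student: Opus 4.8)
The plan is to read off the coefficients $a_\ell$ from the relation $\alpha^4 = \delta$ and to pin down their leading $(1+i)$-adic terms, using the base-$2$ estimates of \S\ref{Sbase2} to control the considerable cancellation that occurs. First I would settle existence and uniqueness. The binomial series $\bigl(1 + (\delta(t) - 1)\bigr)^{1/4}$ defines a fourth root $\alpha$ over the fraction field, and it is the unique root with $\alpha(0) = 1$: any two such differ by a fourth root of unity that is $\equiv 1 \pmod t$, hence equal. Integrality, $\alpha \in R[[t]]$ after a finite extension of $R$, I would then obtain a posteriori from the valuations $v(a_\ell) = \tfrac12 S(\ell) \ge 0$. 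At the outset I would record the two numerical inputs: the binomial valuation $v\!\left(\binom{1/4}{j}\right) = S(j) - 3j$ (from $v(j!) = j - S(j)$ and an odd numerator), and the coefficient data of Lemma~\ref{L4thpower}, where I use that all valuations in play lie in $\tfrac12\ints$, so that $v(d_\ell) > 2 + \tfrac12 S(\ell)$ for $\ell \ge 3$ sharpens to $v(d_\ell) \ge \tfrac52 + \tfrac12 S(\ell)$.

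Writing $\alpha = 1 + A$ and expanding $\alpha^4 = 1 + 4A + 6A^2 + 4A^3 + A^4 = \delta$, I would work from
$$4 a_\ell = d_\ell - 6[A^2]_\ell - 4[A^3]_\ell - [A^4]_\ell,$$
where $[A^k]_\ell$ is the sum of $a_{\ell_1}\cdots a_{\ell_k}$ over compositions $\ell_1 + \cdots + \ell_k = \ell$ with all $\ell_i \ge 1$. Under the expected leading behaviour $v(a_m) = \tfrac12 S(m)$, each such product has valuation $\tfrac12\sum_i S(\ell_i)$ plus the valuation of its integer coefficient, and this is exactly the weighted digit sum estimated by Lemma~\ref{Lpossibilities}. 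The decisive point is that its parts are calibrated so that every $[A^3]_\ell$- or $[A^4]_\ell$-configuration that is not already negligible from the valuation of its integer coefficient — those with a repeated part or a part of multiplicity two or three (parts (v)–(vii)) and the four-distinct-part terms (part (viii)) — is forced up to valuation $\ge \tfrac{S(\ell)+5}{2}$, hence contributes nothing to $a_\ell$ modulo $(1+i)^{S(\ell)+1}$.

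The main term then comes from the pair contributions in $-6[A^2]_\ell$. Here the mechanism I would exploit is that the ``no-carry'' ordered pairs counted in Lemma~\ref{Lpossibilities}(iii) and Lemma~\ref{Lbase2} number $2^{S(\ell)} - 2$, an \emph{even} integer; each contributes the same leading monomial $d_1^\ell(1+i)^{S(\ell)-5\ell}$, so per pair the contribution to $a_\ell$ sits at valuation $\tfrac{S(\ell)}2 - 1$, while the factor of $2$ in the count raises the valuation of their aggregate to exactly $\tfrac{S(\ell)}2$, and since $-3\bigl(2^{S(\ell)-1}-1\bigr)$ is an odd integer (for $S(\ell) \ge 2$) it is $\equiv 1 \pmod{(1+i)}$, producing precisely $d_1^\ell(1+i)^{S(\ell)-5\ell}$. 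Setting $\theta = d_1(1+i)^{-5}$ (a unit), this identifies the leading term of $\alpha$ with the $t^\ell$-coefficient $(1+i)^{S(\ell)}\theta^\ell$ of the product $\prod_{k\ge 0}\bigl(1 + (1+i)\theta^{2^k} t^{2^k}\bigr)$, a clean consistency check via $X_k^2 = (1+i)X_{k+1}$ for $X_k = (1+i)\theta^{2^k}t^{2^k}$. The remaining pieces — the carrying pairs, the diagonal $d_\ell$, and the powers-of-two case $S(\ell) = 1$ where the pair count vanishes — I would dispose of using the finer parts (i), (ii), (iv) and the congruence $\tfrac{d_1^2}{d_2} \equiv 8i \pmod{16}$ of Lemma~\ref{L4thpower}, i.e.\ $d_2 \equiv -d_1^2(1+i)^{-6}$.

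The hard part is the cancellation and, with it, the propagation of $(1+i)$-adic precision. Many contributions to $a_\ell$ a priori have valuation strictly below the answer $\tfrac12 S(\ell)$ — already the no-carry pairs sit at $\tfrac{S(\ell)}2 - 1$ — so the bounded, precisely predicted valuation emerges only through cancellation governed entirely by base-$2$ carrying, and the errors in the lower coefficients threaten to swamp the target order. This is exactly what the suite of lemmas in \S\ref{Sbase2} is engineered to control: the ``never the case'' statements confine the dangerous low-valuation terms to the handful of surviving configurations, the counting statement supplies the parity that restores the correct valuation, and the relation $d_1^2 \equiv 8i\,d_2$ makes the survivors assemble into the single asserted monomial. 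The remaining bookkeeping — the binomial valuations and the substitution $x = d + et$ — is routine once this combinatorial skeleton is in place.
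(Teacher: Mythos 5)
Your proposal is correct and follows essentially the same route as the paper's own proof: the identical recursion $4a_\ell = d_\ell - 6[A^2]_\ell - 4[A^3]_\ell - [A^4]_\ell$ handled by strong induction, with Lemma~\ref{Lpossibilities}(v)--(viii) discarding the cubic and quartic configurations, the odd no-carry pair count from part (iii) (an odd integer being $\equiv 1 \pmod{(1+i)}$) producing the main term $d_1^\ell(1+i)^{S(\ell)-5\ell}$, parts (i), (ii), (iv) together with $\frac{d_1^2}{d_2} \equiv 8i \pmod{16}$ covering the power-of-two and $S(\ell)=2$ cases, and your a posteriori integrality argument being exactly the ``explicit computation with the binomial theorem'' that the paper itself notes as an alternative to its geometric appeal, via Proposition~\ref{P2fermatreduction}(ii) and Raynaud, to complete splitting of the stable model over $R[[t]]$. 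One small correction: the discarded configurations need not reach valuation $\frac{S(\ell)+5}{2}$ as you assert (e.g.\ $a_{\ell/3}^3$ has valuation $\frac{3}{2}$ when $\ell = 3\cdot 2^k$, where $S(\ell)=2$); Lemma~\ref{Lpossibilities} gives only, and the argument needs only, valuation $> \frac{1}{2}S(\ell)$, i.e.\ $\geq \frac{S(\ell)+1}{2}$ since all valuations lie in $\frac{1}{2}\ints$.
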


\begin{remark}\label{Rreminder}
Note that $\alpha(t)$ is a $2^n$th root of $$g(d+et) = x^a(x-1)^bd^{-a}(d-1)^{-b},$$ where $x = d+et$.
\end{remark}

\begin{proof}[of Proposition \ref{Pasymptoticvaluation}]
By Proposition \ref{P2fermatreduction}(ii), the stable model $f^{st}$ of $f$ splits completely above $\hat{\mc{O}}_{X^{st}, \ol{d}} = R[[t]]$.  
Thus, by \cite[Proposition 3.2.3 (2)]{Ra:ab}, $x^a(x-1)^b$ (when written in terms of $t$) is a $2^n$th power in 
$R[[t]]$.  This does not change when it is multiplied by the constant $d^{-a}(d-1)^{-b}$ (as long as we extend $R$ appropriately), 
so we see that $\alpha(t)$ lives in $R[[t]]$ (this can also be shown using an explicit computation with the binomial theorem).  

We have the equation
\begin{equation}\label{Etwoexpansions}
\left(1 + \sum_{\ell=1}^{\infty}a_\ell t^\ell\right)^4 \equiv 1 + \sum_{\ell=1}^{\infty}d_\ell t^\ell.
\end{equation}

We prove the proposition by strong induction, treating the base cases $\ell = 1$, $2$ separately.
Recall that $v(d_1) = \frac{5}{2}$ and $v(d_2) = 2$.
For $\ell=1$, we obtain from (\ref{Etwoexpansions}) that $d_1 = 4a_1$, so $$a_1 = \frac{d_1}{4} \equiv d_1(1+i)^{-4} \pmod{(1+i)^{2}}.$$
For $\ell=2$, we obtain $$d_2 = 4a_2 + 6a_1^2 = 4a_2 + \frac{3}{8}d_1^2,$$ so $a_2 = \frac{d_2}{4} - \frac{3}{32}d_1^2$.  
Using that $\frac{d_1^2}{d_2} \equiv 8i \pmod{16}$ (Lemma \ref{L4thpower}), 
one derives that $\frac{d_2}{4} \equiv \frac{d_1^2}{32i} \pmod{2}$.  Thus, $$a_2 \equiv (-i-3) \frac{d_1^2}{32} \equiv d_1^2(1+i)^{-9} \pmod{(1+i)^2},$$ proving the
proposition for $\ell = 2$.

Now, suppose $\ell > 2$.  Then (\ref{Etwoexpansions}) yields (setting $a_j = 0$ for any $j \notin \ints$, and with all $\ell_i$ assumed to be positive integers):
\begin{align*}
d_{\ell} &= 4a_{\ell} + 6a_{\ell/2}^2 + 4a_{\ell/3}^3 + a_{\ell/4}^4  
+ \sum_{\substack{\ell_1 +\ell_2 = \ell \\ \ell_1 < \ell_2}} 12 a_{\ell_1}a_{\ell_2} 
+ \sum_{\substack{\ell_1 +\ell_2 + \ell_3 = \ell \\ \ell_1 < \ell_2 < \ell_3}} 24 a_{\ell_1}a_{\ell_2}a_{\ell_3}\\ 
&+ \sum_{\substack{\ell_1 + 2\ell_2 = \ell \\ \ell_1 \ne \ell_2}} 12 a_{\ell_1}a_{\ell_2}^2   
+ \sum_{\substack{\ell_1 + 3\ell_2 = \ell \\ \ell_1 \ne \ell_2}} 4 a_{\ell_1}a_{\ell_2}^3   
+ \sum_{\substack{\ell_1 + \ell_2 + \ell_3 + \ell_4 = \ell \\ \ell_1 < \ell_2 < \ell_3 < \ell_4}} 24 a_{\ell_1}a_{\ell_2}a_{\ell_3}a_{\ell_4}\\
&+ \sum_{\substack{\ell_1 + \ell_2  + 2\ell_3 = \ell \\ \ell_1 < \ell_2, \ell_3 \ne \ell_1, \ell_3 \ne \ell_2}} 12 a_{\ell_1}a_{\ell_2}a_{\ell_3}^2 
+ \sum_{\substack{2\ell_1 + 2\ell_2 = \ell \\ \ell_1 < \ell_2}} 6 a_{\ell_1}^2a_{\ell_2}^2.
\end{align*}  
or
\begin{align*}
a_{\ell} &= -\frac{1}{4}d_{\ell} + \frac{3}{2}a_{\ell/2}^2 + a_{\ell/3}^3 + \frac{1}{4}a_{\ell/4}^4  
+ \sum_{\substack{\ell_1 +\ell_2 = \ell \\ \ell_1 < \ell_2}} 3 a_{\ell_1}a_{\ell_2} 
+ \sum_{\substack{\ell_1 +\ell_2 + \ell_3 = \ell \\ \ell_1 < \ell_2 < \ell_3}} 6 a_{\ell_1}a_{\ell_2}a_{\ell_3} \\ 
&+ \sum_{\substack{\ell_1 + 2\ell_2 = \ell \\ \ell_1 \ne \ell_2}} 3 a_{\ell_1}a_{\ell_2}^2   
+ \sum_{\substack{\ell_1 + 3\ell_2 = \ell \\ \ell_1 \ne \ell_2}}  a_{\ell_1}a_{\ell_2}^3   
+ \sum_{\substack{\ell_1 + \ell_2 + \ell_3 + \ell_4 = \ell \\ \ell_1 < \ell_2 < \ell_3 < \ell_4}} 6a_{\ell_1}a_{\ell_2}a_{\ell_3}a_{\ell_4} \\
&+ \sum_{\substack{\ell_1 + \ell_2  + 2\ell_3 = \ell \\ \ell_1 < \ell_2, \ell_3 \ne \ell_1, \ell_3 \ne \ell_2}} 3 a_{\ell_1}a_{\ell_2}a_{\ell_3}^2 
+ \sum_{\substack{2\ell_1 + 2\ell_2 = \ell \\ \ell_1 < \ell_2}} \frac{3}{2} a_{\ell_1}^2a_{\ell_2}^2.
\end{align*}  

Since we need only determine $a_{\ell}$ modulo $(1+i)^{S(\ell)+1}$, and all terms on the right hand side have half-integer valuation, 
we can ignore all terms with valuation greater than $\frac{1}{2}S(\ell)$.  Using the inductive hypothesis, along with Lemmas \ref{L4thpower} and 
\ref{Lpossibilities} (v), (vi), (vii), and (viii), we obtain

\begin{equation}\label{Ereduced}
a_{\ell} \equiv \frac{3}{2}a_{\ell/2}^2 + \frac{1}{4}a_{\ell/4}^4  
+ \sum_{\substack{\ell_1 +\ell_2 = \ell \\ \ell_1 < \ell_2}} 3 a_{\ell_1}a_{\ell_2} 
+ \sum_{\substack{2\ell_1 + 2\ell_2 = \ell \\ \ell_1 < \ell_2}} \frac{3}{2} a_{\ell_1}^2a_{\ell_2}^2 \pmod{(1+i)^{S(\ell) + 1}}.
\end{equation}

If $\ell$ is a power of $2$ (i.e., $S(\ell) = 1$), then by Lemma \ref{Lpossibilities} (i)--(iv), the induction hypothesis, and (\ref{Ereduced}), we have 
\begin{align*}
a_{\ell} \equiv \frac{3}{2}a_{\ell/2}^2 + \frac{1}{4}a_{\ell/4}^4 &\equiv d_1^{\ell}\left(\frac{3}{2}(1+i)^{2 - 5\ell} + \frac{1}{4}(1+i)^{4 - 5\ell}\right)\\
&\equiv d_1^{\ell}(3i - 1)(1+i)^{-5\ell} \equiv d_1^{\ell} (1+i)^{1- 5\ell} \pmod{(1+i)^2},
\end{align*}
thus proving the proposition for such $\ell$.

For all other $\ell$, we have (by Lemma \ref{Lpossibilities} (i), the induction hypothesis, and (\ref{Ereduced})) that
\begin{equation}\label{Ereduced2}
a_{\ell} \equiv \frac{3}{2}a_{\ell/2}^2  
+ \sum_{\substack{\ell_1 +\ell_2 = \ell \\ \ell_1 < \ell_2}} 3 a_{\ell_1}a_{\ell_2} 
+ \sum_{\substack{2\ell_1 + 2\ell_2 = \ell \\ \ell_1 < \ell_2}} \frac{3}{2} a_{\ell_1}^2a_{\ell_2}^2 \pmod{(1+i)^{S(\ell) + 1}}.
\end{equation}
By Lemma \ref{Lpossibilities} (ii) and (iv), the first and last terms matter only when $S(\ell) = 2$ and $\ell$ is even, in which case their 
combined contribution is
$$3d_1^{\ell}(1+i)^{4-5\ell} \pmod{(1+i)^3},$$ which is trivial.  
So in any case, we need only worry about the middle term.  By Lemma \ref{Lpossibilities} (iii), the middle
term is the sum of $2^{S(\ell) - 1} - 1$ subterms, each congruent to $$3d_1^{\ell}(1+i)^{S(\ell) - 5\ell} \pmod{(1+i)^{S(\ell) + 1}}.$$  
Since $S(\ell) \geq 2$, this sum is in 
turn congruent to $$d_1^{\ell}(1+i)^{S(\ell) - 5\ell} \pmod{(1+i)^{S(\ell) + 1}},$$ proving the proposition.
\end{proof}

\begin{corollary}\label{Cdiffform}
In the notation of Proposition \ref{Pasymptoticvaluation}, the power series 
$$\tilde{\alpha}(t) := \frac{d(1-d)\alpha(t)}{x(1-x)} = \frac{d(1-d)\alpha(t)}{(d+et)(1-d-et)}$$
has the form $$\sum_{i=0}^{\infty} \tilde{a}_{\ell} t^{\ell},$$ where $\tilde{a}_0 = 1$ and $v(\tilde{a}_{\ell}) = v(a_{\ell}) = \frac{1}{2}S(\ell)$ for all $\ell$.
\end{corollary}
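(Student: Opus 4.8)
The plan is to analyze $\tilde{\alpha}(t) = \frac{d(1-d)\alpha(t)}{(d+et)(1-d-et)}$ by separating it into the product of $\alpha(t)$, whose coefficient valuations are already understood from Proposition \ref{Pasymptoticvaluation}, and the rational factor $\frac{d(1-d)}{(d+et)(1-d-et)}$, which I would expand as a power series in $t$ and bound. First I would establish that the geometric-series expansion of the rational factor has coefficients whose valuations grow much faster than $\frac{1}{2}S(\ell)$, so that multiplying by it does not lower any valuation below $\frac{1}{2}S(\ell)$; then I would confirm that the leading ($\ell$-th) term in the convolution still comes from the term $a_\ell$ times the constant term $1$ of the rational factor, so that $v(\tilde{a}_\ell) = v(a_\ell)$ exactly.

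Concretely, write $\frac{d(1-d)}{(d+et)(1-d-et)} = \frac{1}{(1 + \frac{e}{d}t)(1 - \frac{e}{1-d}t)}$. Expanding each factor geometrically gives coefficients that are polynomials in $\frac{e}{d}$ and $\frac{e}{1-d}$. Using Remark \ref{Rdval}, we have $v(d) = 0$ and $v(1-d) = n-s$, and $v(e) = n - \frac{s}{2} + \frac{1}{2}$, so $v\left(\frac{e}{d}\right) = n - \frac{s}{2} + \frac{1}{2}$ and $v\left(\frac{e}{1-d}\right) = \frac{s}{2} + \frac{1}{2}$. Since $s \geq 2$, both of these valuations are at least $\frac{3}{2}$, strictly greater than $1$. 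Writing $\frac{d(1-d)}{(d+et)(1-d-et)} = \sum_{k=0}^{\infty} b_k t^k$ with $b_0 = 1$, I expect $v(b_k) \geq k \cdot \frac{3}{2}$ by the binomial/convolution structure, which in particular exceeds $\frac{1}{2}S(k)$ for all $k \geq 1$ since $S(k) \leq \log_2(k) + 1 \leq k$.

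I would then form the Cauchy product $\tilde{a}_\ell = \sum_{k=0}^{\ell} b_k a_{\ell - k}$. For the term $k = 0$, we get $a_\ell$ with valuation exactly $\frac{1}{2}S(\ell)$. For each term with $k \geq 1$, I would bound its valuation by $v(b_k) + v(a_{\ell-k}) \geq \frac{3}{2}k + \frac{1}{2}S(\ell - k)$ and argue via subadditivity of $S$ (Lemma \ref{Lbase2}) that $\frac{3}{2}k + \frac{1}{2}S(\ell-k) > \frac{1}{2}S(\ell)$; indeed $S(\ell) \leq S(\ell - k) + S(k) \leq S(\ell-k) + k$, so $\frac{1}{2}S(\ell) \leq \frac{1}{2}S(\ell-k) + \frac{k}{2} < \frac{1}{2}S(\ell-k) + \frac{3}{2}k$ for $k \geq 1$. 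Hence every $k \geq 1$ term has strictly larger valuation than the $k=0$ term, so the minimum is uniquely attained and $v(\tilde{a}_\ell) = v(a_\ell) = \frac{1}{2}S(\ell)$; the value $\tilde{a}_0 = 1$ is immediate since $g(d)=1$ forces the constant terms to match.

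The main obstacle I anticipate is verifying the valuation bound $v(b_k) \geq \frac{3}{2}k$ cleanly, since $b_k$ is a sum of products $\left(\frac{e}{d}\right)^{j}\left(\frac{-e}{1-d}\right)^{k-j}$ over $0 \leq j \leq k$ and one must confirm no cancellation causes the valuation to drop; here the fact that the two base valuations $n - \frac{s}{2} + \frac{1}{2}$ and $\frac{s}{2} + \frac{1}{2}$ need not be equal means each summand $\left(\frac{e}{d}\right)^j\left(\frac{-e}{1-d}\right)^{k-j}$ has valuation $j(n - \frac{s}{2} + \frac{1}{2}) + (k-j)(\frac{s}{2} + \frac{1}{2}) \geq k \cdot \frac{3}{2}$ term-by-term (each factor contributing at least $\frac{3}{2}$), so the nonarchimedean inequality gives $v(b_k) \geq \frac{3}{2}k$ without needing to track cancellation. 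Once this is in place the rest is a routine application of the ultrametric inequality, so the corollary follows.
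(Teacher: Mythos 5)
Your proposal is correct and follows essentially the same route as the paper's proof: the paper likewise factors $\tilde{\alpha}(t) = \alpha(t)\left(\sum_j \mu^j t^j\right)\left(\sum_j \nu^j t^j\right)$ with $\mu = -\frac{e}{d}$, $\nu = \frac{e}{1-d}$, observes $v(\mu), v(\nu) > 1$ so the coefficients $\xi_k$ of the rational factor satisfy $v(\xi_k) > k$, and then shows every $k \geq 1$ term of the Cauchy product exceeds $\frac{1}{2}S(\ell)$ in valuation using subadditivity of $S$ (Lemma \ref{Lbase2}), concluding by the ultrametric inequality. Your marginally sharper bound $v(b_k) \geq \frac{3}{2}k$ in place of the paper's $v(\xi_k) > k$ is an inessential difference.
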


\begin{proof}
Recall that $v(1-d) = n-s$ (Remark \ref{Rdval}), 
that $v(e) = n - \frac{s-1}{2}$, and that we assume $2 \leq s \leq n-1$.  Set $\mu = -\frac{e}{d}$ and $\nu = \frac{e}{1-d}$.  Then $v(\mu) = 
n-\frac{s-1}{2} > 1$ and $v(\nu) = \frac{s+1}{2} > 1$.  Expanding $\tilde{\alpha}(t)$ out as a power series yields
$$\tilde{\alpha}(t) = \alpha(t)(1 + \mu t + \mu^2t^2 + \cdots)(1 + \nu t + \nu^2 t^2 + \cdots) = \alpha(t)(1 + \xi_1t + \xi_2 t^2 + \cdots),$$
where $v(\xi_{\ell}) > \ell$ for all $\ell$.  The constant term is $1$, so $\tilde{a}_0 = 1$.  The coefficient of $t^{\ell}$ is 
$$\tilde{a}_{\ell} = a_{\ell} + \xi_{\ell} + \sum_{j=1}^{\ell-1} a_{\ell-j}\xi_j.$$
We know $v(a_{\ell}) = \frac{1}{2}S(\ell)$.  We have seen that $v(\xi_{\ell}) > \ell > \frac{1}{2}S(\ell)$.  Also, for $1 \leq j \leq \ell-1$, we have
$$v(a_{\ell-j}\xi_j) > \frac{1}{2}S(\ell-j) + j > \frac{1}{2}S(\ell-j) + \frac{1}{2}S(j) \geq \frac{1}{2}S(\ell).$$  By the non-archimedean property, we conclude that
$v(\tilde{a}_{\ell}) = \frac{1}{2}S(\ell)$.
\end{proof}

\begin{remark}\label{Rindependence}
Note that $v(\tilde{a}_{\ell})$ does not depend on $a$ or $b$.
\end{remark}

\begin{acknowledgements}
I thank Pierre Colmez, Dick Gross, Johan de Jong, Melissa Liu, and Shouwu Zhang for useful conversations.  In particular, I thank Michel Matignon for pointing me in the direction of this question.  I also thank the referee for useful expository suggestions.
\end{acknowledgements}

\end{document}